\documentclass[10pt]{article}
\usepackage{amsfonts} 
\usepackage{pdfsync}
\usepackage{amsthm}
\usepackage{amsmath}
\usepackage{color}
\usepackage[colorlinks=true]{hyperref}

\catcode`@=11 \@addtoreset{equation}{section} \catcode`@=12

\newtheorem{theo}{Theorem}[section]
\newtheorem{prop}[theo]{Proposition}

\newtheorem{lemme}[theo]{Lemma}

\def\R{\mathbb  R}

\parindent=0pt

\setlength{\textwidth}{15.5cm}
\setlength{\oddsidemargin}{0.5cm}

\title{On radial solutions for a Fully Non Linear degenerate or singular PDE.}
\author{\ \\ C.O. Ndaw }
\date{}
\begin{document}
\maketitle

 \begin{abstract}
  In this paper we consider equations $-| \nabla u |^\alpha F ( D^2 u) = |u|^{p-1} u $ in  an annulus. $F$ is Fully Nonlinear Elliptic, $\alpha$ is some real $> -1$ and $p > 1+ \alpha$. The solutions are intended in the sense of the  definition given by   Birindelli and Demengel \cite{BD1}.
    We prove the existence of non trivial  positive/negative radial solutions. 
    \end{abstract}
    
     \section{Introduction}
     
    In this article we consider positive (or negative) radial solutions of the equation 
    \begin{equation}\label{eqp} \left\{ \begin{array}{lc}
     -| \nabla u |^\alpha F(  D^2 u ) = u^p& {\rm in } \ \Omega\\
     u=0 & {\rm on} \ \partial \Omega \end{array}\right.
     \end{equation}

      where $\Omega$ is an annulus of $\R^N$ , $\alpha >-1$, $F$ is Fully Nonlinear Elliptic, i.e 
      \begin{equation}\label{F0}
      F(0)=0
      \end{equation}
      \begin{equation}\label{FNL}
      \mbox{for all } M, M^\prime  \geq 0 \ {\rm  in}\  {\cal S}_n, \ 
       \lambda tr( M^\prime ) \leq F(  M+ N)-F(M)\leq \Lambda tr(M^\prime )
       \end{equation} 
       where ${\cal S}_N$ is the space of symmetric matrices $(N,N)$
       for some $0<\lambda<\Lambda$. $\alpha $ is supposed to be $> -1$. This assumption enables us to define viscosity solutions in the sense of \cite {BD1}.
        
         The results here enclosed extend the existence of solutions in the case $\alpha = 0$ in \cite{GLP}. 
         
          Before detailing the result and recalling the known results in the case $\alpha = 0$, let us point out that  for general domains, when $\alpha \neq 0$, very few things have been done in the case of a \underline{positive} non linear term $u^p$ on the right hand side, except the case $p=1+\alpha$ which corresponds to the eigenvalue case.  
           It is clear that, if we consider a more general  bounded domain than an annulus, the existence of solutions depends on the conditions 
           $ p> 1+\alpha,\ p = 1+ \alpha \mbox{ or}\ p< 1+ \alpha$. 
           
           In the case $p=1+\alpha$, the existence of positive solutions depends on the first eigenvalue as defined in \cite{BD1}. 
           The other cases are the object of several works in preparation.  
           
           \medskip
           
           Note that for the case $\alpha = 0$ and general domains, the existence's results are mainly obtained by Sirakov and Quaas in \cite{QS2} : 
           
            Using a fixed point argument (more precisely the degree theory of Brouwer), they prove the existence of non trivial positive solutions when, on the right hand side, $u^p$ is replaced by a more general term, which is said however to be sub-linear, in the sense that 
            $$\lim_{ u \rightarrow 0} { f(u) \over u} \geq ( \lambda_1^++\epsilon) ,  \ \forall  u>0,\  f(u)\leq ( \lambda^+_1-\epsilon) u + k$$
             for some fixed $\epsilon >0$ and $k>0$. $\lambda_1^+$ denotes the first  positive eigenvalue. The super-linear case is defined by the conditions $$\limsup_{u\rightarrow 0} {f(u)\over u} <\lambda_1^+ < \liminf _{u\rightarrow +\infty} {f(u)\over u}.$$                In this superlinear case the authors prove the existence of solutions provided that $p \leq p^+ := { n_+ \over n_+-2}$ if $F = { \cal M}_{\lambda,\Lambda}^+$ and $p\leq p^-= { n_-\over n_--2}$ when $F = { \cal M}_{\lambda,\Lambda}^-$,
               where $ n_+ = { \lambda ( N-1)\over \Lambda}+1$, $n_-= { \Lambda ( N-1) \over \lambda} +1$ and ${ \cal M}_{\lambda,\Lambda}^-$ and ${ \cal M}_{\lambda,\Lambda}^-$ are the extremal Pucci's operators (see \cite{CC} for more details). 
              
              Let us point out that this restriction is  due to the method employed which consists in using a blow up procedure and  some Liouville type results which are partly due to Cutri and Leoni, \cite{CL} (see also \cite{FQ}).
              
              However, it is clear that whatever the method employed, there exists a power $p$ beyond which there does not exist, for general domains, non trivial solutions.               This restriction is linked in the variational case to the existence of the critical power. 
              
              Let us recall some facts about the Laplacian case :  
            In that case,   the  equation 
               \begin{equation}\label{lap}\left\{\begin{array}{lc} -\Delta u = |u|^{p-1} u&\ {\rm in } \ \Omega \\
                u = 0 & {\rm on} \ \partial \Omega
                \end{array}\right.
                \end{equation} 
                 does not admit a solution when $N \geq 3,$ $p\geq { N+2 \over N-2}$ and $\Omega$ is star shaped. 
                 
                                  On the contrary there are a lot of non trivial  domains for which there exist non trivial solutions : In particular, the results in \cite{GLP} are expected  since,  in the case of  an annulus, denoted here  $\mathcal{A}$, it is easy to see that the space  ${H_o^1}_{rad} ( \mathcal{A})=\{u\in H^{1}_{0}(\mathcal{A})\ :\ u \mbox{ is radial}\}$ has the property that it is embedded in any $L^p(\mathcal{A})$ and the embedding is compact  when $\mathcal{A}$ is bounded. This implies the existence of solutions   to (\ref{lap}) for any $p>1$,  in that case. 
            
           As we said above,   this is extended to the  non variational case by \cite{GLP}
 where the authors  prove 
 
  \begin{theo}\label{thm1GLP}
   Let $\mathcal{A}_{a, b}$ be the annulus $\mathcal{A}_{a, b} = \{ x \in {\mathbb R}^N, a < |x| < b\}$ with $a>0$. Suppose that  $F$ satisfies (\ref{F0}) and (\ref{FNL}).
   Then  there exist  positive / negative non trivial  radial solutions to 
   $$ -F(D^2 u) = |u|^{p-1} u \ {\rm in} \ \mathcal{A}_{a, b},\ u= 0 \ {\rm on  } \ \partial \mathcal{A}_{a, b}.$$
   \end{theo}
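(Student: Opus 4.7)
The plan is to reduce \eqref{eqp} (with $\alpha=0$) to a two-point ODE on $[a,b]$, set up a compact positive solution operator, and produce a nontrivial fixed point via a topological fixed-point theorem in a cone.

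Since we seek a radial $u(x)=u(r)$, the Hessian $D^2u$ has eigenvalues $u''(r)$ and $u'(r)/r$ (the latter with multiplicity $N-1$), so \eqref{FNL} reduces $F(D^2 u)$ to $\Phi(u''(r), u'(r)/r)$ for a continuous $\Phi$ that is monotone in each argument and inherits uniform ellipticity. The problem becomes
$$-\Phi\!\left(u''(r),\frac{u'(r)}{r}\right) = |u|^{p-1} u \text{ on } (a,b), \qquad u(a)=u(b)=0,$$
and because $r\geq a>0$ the operator stays uniformly elliptic on the whole interval.

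Define the solution operator $T$ on the cone $K=\{v\in C([a,b]):v\geq 0\}$ by letting $T(v)$ be the unique viscosity solution of $-\Phi(u'',u'/r)=v^p$ with zero Dirichlet data. Existence, uniqueness, and $T(K)\subset K$ follow from Perron's method and the comparison principle for uniformly elliptic fully nonlinear operators (\cite{BD1}); compactness of $T$ is a consequence of interior and boundary $\CG$ estimates. Any fixed point of $T$ in $K$ is a nonnegative radial viscosity solution of \eqref{eqp}.

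A nontrivial fixed point is then produced by Krasnosel'skii's cone fixed-point theorem. The inward estimate on a small sphere is straightforward: if $\|v\|_\infty=r_1$, then $v^p\leq r_1^{p-1}v$, and comparison with a multiple of the principal positive Dirichlet eigenfunction $\ph_1$ of $-\Phi$ on $(a,b)$ (with eigenvalue $\mu_1>0$, whose existence is provided by \cite{BD1}) gives $\|T(v)\|_\infty\leq r_1^{p-1}\mu_1^{-1}\|v\|_\infty<\|v\|_\infty$ once $r_1$ is small, since $p>1$. The outward estimate on a large sphere is more delicate: fix a compact subinterval $[c,d]\subset(a,b)$ with its own principal Dirichlet eigenfunction $\psi$ and eigenvalue $\mu$; exploiting $v^p\geq(\mu+1)v$ wherever $v\geq M$ with $M$ large, one compares $T(v)$ from below with a multiple of $\psi$ to obtain $\|T(v)\|_\infty\geq\|v\|_\infty$ for all $v\in K$ of sufficiently large sup-norm $r_2$. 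Krasnosel'skii's theorem then yields a fixed point $u\in K$ with $0<r_1\leq\|u\|_\infty\leq r_2$, and the strong maximum principle elevates $u$ to a strictly positive radial solution. The negative solution is obtained by applying the same argument to the conjugate operator $\tilde F(M):=-F(-M)$, which satisfies \eqref{F0}--\eqref{FNL}, and changing the sign of the resulting solution.

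The main obstacle is the outward estimate on the large sphere: without a variational structure there is no Rayleigh quotient to invoke, so one must build, from the principal-eigenvalue theory of \cite{BD1} on subintervals together with the monotonicity of the eigenvalue with respect to the domain, test functions that force $T(v)$ to dominate $v$ whenever $v$ is large enough. Once these eigenfunction tools are in hand, the argument parallels the classical superlinear Laplacian case on the annulus, the novelty being the systematic use of viscosity-solution comparison in place of bilinear-form pairing.
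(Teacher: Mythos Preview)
The paper attributes this theorem to \cite{GLP} and does not reprove it directly; its own proof of the more general Theorem~\ref{exieqp} (which specializes to the present statement when $\alpha=0$) follows the same route as \cite{GLP}, namely a \emph{shooting argument}. One solves the radial initial-value problem with $u(a)=0$, $u'(a)=\gamma>0$, lets $\rho(\gamma)$ be the first zero of $u(\cdot,\gamma)$ beyond $a$, and uses monotone energy functionals together with principal-eigenvalue comparisons to show that $\gamma\mapsto\rho(\gamma)$ is continuous on an unbounded component $(\gamma^*,\infty)$ with $\rho(\gamma)\to a$ as $\gamma\to\infty$ and $\rho(\gamma)\to\infty$ as $\gamma\searrow\gamma^*$; the intermediate-value theorem then produces $\gamma$ with $\rho(\gamma)=b$. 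Your Krasnosel'skii approach is therefore a genuinely different strategy. The shooting method buys quantitative control of $u'(\rho(\gamma),\gamma)$ (needed for sign-changing solutions) and avoids any compactness or a~priori bound machinery; your approach, if it worked, would be more flexible with respect to the nonlinearity.

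However, your sketch has real gaps. First, the comparison ``$\|T(v)\|_\infty\leq r_1^{p-1}\mu_1^{-1}\|v\|_\infty$'' implicitly uses positive $1$-homogeneity of $F$ (so that multiples of $\varphi_1$ satisfy scaled equations and comparison applies), but hypotheses \eqref{F0}--\eqref{FNL} do not impose homogeneity; the inward estimate can be rescued by an ABP bound, $\|T(v)\|_\infty\leq C\|v^p\|_{L^N}\leq C' r_1^{\,p}<r_1$ for small $r_1$, but not as you wrote it. Second, and more seriously, the outward estimate fails in the cone $K=\{v\geq0\}$ you chose: a function $v$ with $\|v\|_\infty=r_2$ large may concentrate near the endpoints and be tiny on $[c,d]$, so ``$v^p\geq(\mu+1)v$ on $[c,d]$'' is simply false in general and the comparison with $\psi$ collapses. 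The standard remedy is to work in a sub-cone $\{v\in K:\min_{[c,d]}v\geq\sigma\|v\|_\infty\}$ and to prove that $T$ maps $K$ into it via a Harnack or boundary-Harnack argument for the fully nonlinear operator; you have not carried this out, and it is precisely the step where the absence of a variational structure bites. The shooting argument sidesteps this difficulty entirely by tracking a single ODE trajectory rather than a cone of test functions.
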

   
   In a second part of the paper they prove the existence of infinitely changing sign solutions, more precisely  they prove : 
    
\begin{theo} Let us suppose that, as in Theorem \ref{thm1GLP},  $F$ satisfies (\ref{F0}) and (\ref{FNL}). Then, for any $k\ \in\ \mathbb{N}$, there exist radial solutions $u^{\pm}_{k}$ of 
\begin{eqnarray*}
-F(D^{2}u)=|u|^{p-1}u \ \ \mbox{in}\ \mathcal{A}_{a,b},\ \ u=0\ \ \mbox{on}\ \partial\mathcal{A}_{a,b}
\end{eqnarray*}
 and finite sequences $(r^{\pm}_{k,j})^{k}_{j=0}\subset[a,\ b]$ such that
\begin{itemize}
\item[(i)]{$a=r^{\pm}_{k,0}<r^{\pm}_{k,1}<\cdots<r^{\pm}_{k,k}=b$;}
\item[(ii$_{+}$)]{$(-1)^{j-1}u^{+}_{k}>0$  in $\mathcal{A}_{r^{+}_{k,j-1}, r^{+}_{k,j}}$ for $j=1, \cdots, k$;}
\item[(ii$_{-}$)]{$(-1)^{j-1}u^{-}_{k}>0$  in $\mathcal{A}_{r^{-}_{k,j-1}, r^{-}_{k,j}}$ for $j=1, \cdots, k$}
\end{itemize} 
where $\mathcal{A}_{r^{+}_{k,j-1}, r^{+}_{k,j}}=D(0,r^{+}_{k,j})\setminus\overline{D(0,r^{+}_{k,j-1})}$ and $\mathcal{A}_{r^{-}_{k,j-1}, r^{-}_{k,j}}=D(0,r^{-}_{k,j})\setminus\overline{D(0,r^{-}_{k,j-1})}$.\\
\end{theo}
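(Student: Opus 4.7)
The plan is to reduce the problem to a one-dimensional shooting argument. Since $u$ is radial, writing $u(x) = u(r)$ with $r = |x|$, the Hessian $D^2 u$ has eigenvalues $u''(r)$ (simple) and $u'(r)/r$ (of multiplicity $N-1$), and by (\ref{F0})--(\ref{FNL}) together with the rotational invariance of $F$, $F(D^2 u) = \mathcal{F}(u''(r), u'(r)/r)$ for a uniformly elliptic, positively $1$-homogeneous function $\mathcal{F}$. The equation becomes the second order ODE
\begin{equation*}
-\mathcal{F}\!\bigl(u''(r),\, u'(r)/r\bigr) = |u(r)|^{p-1} u(r), \qquad r \in [a,b],
\end{equation*}
and because $a > 0$ the term $u'/r$ has no singularity. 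For $\mu \in \mathbb{R}$, let $u_\mu$ denote the solution of this ODE with $u_\mu(a) = 0$ and $u_\mu'(a) = \mu$; existence, uniqueness and continuous dependence on $\mu$ follow from the standard Cauchy theory adapted to the viscosity framework of \cite{BD1}.

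The next step is a zero-counting argument. Set
\[
\mathcal{N}(\mu) := \#\{r \in (a, b] : u_\mu(r) = 0\}.
\]
Uniqueness forces every zero of $u_\mu$ to be simple: otherwise $(u_\mu, u_\mu')$ would vanish simultaneously and $u_\mu$ would be identically zero, contradicting $\mu \neq 0$. Hence $\mathcal{N}$ is piecewise constant on $\mathbb{R} \setminus \{0\}$ and can only jump by exactly one, which occurs precisely when a simple zero of $u_\mu$ crosses $r = b$ as $\mu$ varies.

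The crucial point is to show that $\mathcal{N}(\mu) \to +\infty$ as $\mu \to +\infty$. I would prove this by a rescaling argument: exploiting the $1$-homogeneity of $\mathcal{F}$ and the $p$-homogeneity of the right hand side, for exponents $\sigma_1, \sigma_2$ chosen with $\sigma_1(p-1) = 2\sigma_2$, the function $v_\mu(s) := \mu^{-\sigma_1} u_\mu(a + \mu^{-\sigma_2} s)$ satisfies an ODE of the same type on an interval of length $\mu^{\sigma_2}(b-a)$, which tends to $+\infty$. On bounded sub-intervals the equation approaches an autonomous fully nonlinear ODE, whose phase-plane trajectories, controlled by an energy of the form $E(s) = \tfrac{1}{2}|v'|^2 + \tfrac{1}{p+1}|v|^{p+1}$ (monotone up to Pucci-type estimates on $\mathcal{F}$), are bounded and return transversally to $\{v = 0\}$ in uniformly bounded time. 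Fitting many returns inside the expanding rescaled interval yields $\mathcal{N}(\mu) \to \infty$.

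The theorem then follows by an intermediate-value argument: for each $k \geq 1$ define $\mu_k^+$ to be the smallest $\mu > 0$ for which the $k$-th zero of $u_\mu$ in $(a, b]$ lies exactly at $b$, set $u_k^+ := u_{\mu_k^+}$, and list its zeros as $a = r_{k,0}^+ < r_{k,1}^+ < \cdots < r_{k,k}^+ = b$. Simplicity of the zeros then forces $u_k^+$ to alternate sign on consecutive sub-annuli, giving (ii$_+$). The family $u_k^-$ is obtained either by shooting with $\mu < 0$ or, equivalently, by applying the same construction to the conjugate operator $\tilde{F}(M) := -F(-M)$, which also satisfies (\ref{F0})--(\ref{FNL}). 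The hard part of the argument is the oscillation estimate for large $\mu$: in the Laplacian or linear case it is handled by a Sturmian comparison, whereas here one must carefully justify both the continuous dependence of the viscosity IVP on initial data and the uniform periodicity (independent of the starting energy) of the rescaled phase-plane orbits.
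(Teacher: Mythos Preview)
This theorem is not proved in the present paper: it is quoted in the introduction as a result of Galise, Leoni and Pacella \cite{GLP} (``In a second part of the paper they prove \ldots''), and the paper's own contribution is Theorem~\ref{exieqp}, which treats only one-signed solutions in the case $\alpha\neq 0$. There is therefore no proof here to compare your proposal against.

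On its own merits your outline is a reasonable sketch of the standard shooting strategy for nodal solutions, and is in the spirit of \cite{GLP}. A few points would need to be tightened before it becomes a proof. First, the radial reduction $F(D^2u)=\mathcal{F}(u'',u'/r)$ requires that $F$ be invariant under orthogonal conjugation, which is not contained in (\ref{F0})--(\ref{FNL}); the paper assumes this implicitly, but you should state it. Second, the assertion that $\mathcal{N}(\mu)$ ``can only jump by exactly one'' is not quite what you need and is not obviously true as stated; the cleaner formulation is that the $k$-th zero $\rho_k(\mu)$ of $u_\mu$ in $(a,\infty)$, when it exists, depends continuously on $\mu$ (by simplicity of zeros plus continuous dependence on initial data), so it suffices to show that $\rho_k(\mu)>b$ for some $\mu$ and $\rho_k(\mu)<b$ for some larger $\mu$. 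You never address the first inequality --- that for small $\mu>0$ the solution has no zero in $(a,b]$ --- and this is exactly the role played in the one-signed case by estimates of the type in Proposition~\ref{proptau}(d) and Proposition~\ref{gammatoinfty}(i). Third, your rescaling heuristic is correct (the limit equation $-\mathcal{F}(v'',0)=|v|^{p-1}v$ is autonomous with periodic orbits of bounded period), but ``the limit oscillates'' does not by itself imply ``$u_\mu$ has many zeros in $[a,b]$'': you need a uniform oscillation estimate valid for all large $\mu$, for instance a Sturm-type comparison of $u_\mu$ with a linear equation having a large constant potential on a sub-interval where $|u_\mu|$ is bounded below. That step is the genuine analytic content and is only gestured at in your last sentence.
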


     Concerning the case $\alpha \neq 0$, let us summarize some of the results which are the object of several works in preparation : 
     
      Following the arguments in \cite{QS1}, we can prove the existence of solution to the "sub-linear" problem, i.e assuming that for some $\epsilon$ and $k$ positive, 
      
      $$\lim_{ u \rightarrow 0} { f(u) \over u^{1+\alpha} } \geq ( \lambda_1^++\epsilon) ,  \mbox{ and } \forall  u>0,\ f(u)\leq ( \lambda_1^+-\epsilon) u^{1+\alpha}  + k.$$
             In the "super-linear" case, if we argue as in \cite {QS1}, we need a Liouville type result as in \cite{CL},  and a blow up argument. The Liouville result is, ( \cite{D1} )
        \begin{theo}\label{th1}
   Let $n_- = { \Lambda (N-1) \over \lambda} + 1$.   Suppose that $n_{-}>2$. 
  Suppose that 
      $p \leq \tilde p^-:= {n_- (1+ \alpha)-\alpha  \over n_--2}$ then any radial non negative super-solution of 
      $$
       -| \nabla u |^\alpha  F( D^2 u) \geq |u|^{p-1} u\  {\rm in} \ {\mathbb R}^N$$
       
        is identically zero. 
     \end{theo}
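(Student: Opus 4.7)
My plan is to adapt the Cutri--Leoni Liouville strategy of \cite{CL} to the degenerate equation. First, since (\ref{FNL}) gives $F \geq {\cal M}^-_{\lambda,\Lambda}$ pointwise and $|\nabla u|^\alpha \geq 0$, any supersolution of $-|\nabla u|^\alpha F(D^2 u) \geq u^p$ is also a supersolution of $-|\nabla u|^\alpha {\cal M}^-(D^2 u) \geq u^p$, so I may replace $F$ by ${\cal M}^-$. A standard maximum-principle/Hopf argument for nonnegative radial supersolutions then forces $u'(r) \leq 0$ on $(0,\infty)$. Writing $v := -u' \geq 0$, the eigenvalues of $D^2 u$ are $u'' = -v'$ (simple) and $u'/r = -v/r \leq 0$ (multiplicity $N-1$), so in the ``convex'' regime $u''\geq 0$ the Pucci expression collapses into the radial Laplacian in the effective dimension $n_-$:
\[
-{\cal M}^-(D^2 u) \;=\; \lambda\!\left(v' + \tfrac{n_- - 1}{r}\, v\right).
\]

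Combining this with the supersolution property yields the ODE inequality
\[
\lambda\, v^\alpha\!\left(v' + \tfrac{n_- - 1}{r}\, v\right) \;\geq\; u^p,
\]
or in divergence form,
\[
\tfrac{\lambda}{\alpha+1}\, \tfrac{d}{dr}\!\left[\, r^{(\alpha+1)(n_- - 1)}\, v^{\alpha+1}\,\right] \;\geq\; r^{(\alpha+1)(n_- - 1)}\, u^p.
\]
Integrating from $0$ to $r$ (with $v(0) = 0$ by radial regularity at the origin) and using the monotonicity of $u$ yields a pointwise lower bound of the form $v(r)^{\alpha+1} \geq C\, r\, u(r)^p$; separating variables in $-u' = v$ and integrating once more gives the polynomial decay
\[
u(R) \;\leq\; C' R^{-(2+\alpha)/(p-1-\alpha)}.
\]

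The main obstacle is promoting this decay into $u \equiv 0$ in the critical range $p \leq \tilde p^-$. The algebraic observation is that the decay exponent $a := (2+\alpha)/(p-1-\alpha)$ equals $n_- - 2$ exactly at $p = \tilde p^-$, so for $p < \tilde p^-$ one has $a < n_- - 2$ and no homogeneous profile $c\, r^{-a}$ is an admissible solution of the associated ODE (the coefficient of $c^{p-1-\alpha}$ would have the wrong sign). To make this rigorous, I would apply the test-function method of \cite{CL}: multiplying the divergence-form inequality by a cut-off $\phi_R(r) = \eta(r/R)^\beta$, integrating by parts and applying H\"older's inequality with exponents tuned to the scaling of the equation yields a bound
\[
\int_0^\infty r^{(\alpha+1)(n_- - 1)}\, u^p\, \phi_R\, dr \;\leq\; C\, R^{-\gamma}
\]
with $\gamma > 0$ whenever $p < \tilde p^-$; letting $R \to \infty$ forces $u \equiv 0$. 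The critical case $p = \tilde p^-$ is handled by a secondary refinement exploiting the monotonicity of dyadic-annulus integrals. Two technical points remain: (i) since $u$ is only a viscosity solution in the sense of \cite{BD1}, the pointwise ODE manipulations at $\{v = 0\}$ must be justified by approximation; and (ii) the ``concave'' regime $u'' \leq 0$ produces the analogous ODE with effective dimension $N \leq n_-$ and constant $\Lambda$ in place of $\lambda$, giving the less restrictive threshold $(N(1+\alpha)-\alpha)/(N-2) \geq \tilde p^-$, so that regime is covered \emph{a fortiori}.
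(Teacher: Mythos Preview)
The paper does not actually prove Theorem~\ref{th1}. It is quoted in the introduction as a result due to Demengel, cited as \cite{D1} (``Private Communication''), and is mentioned only to situate the super-linear problem for $\alpha\neq 0$ in context. The body of the paper is devoted entirely to Theorem~\ref{exieqp} (existence of constant-sign radial solutions in an annulus), and no argument for the Liouville statement appears anywhere in the text. So there is no ``paper's own proof'' against which to compare your attempt.

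As for your sketch itself: the overall Cutri--Leoni strategy is the natural one, and your identification of the effective dimension $n_-$ and the critical exponent $\tilde p^-$ via the scaling of the divergence-form ODE is correct. But one step is not as clean as you present it. You write the identity $-{\cal M}^-(D^2 u)=\lambda\bigl(v'+\tfrac{n_--1}{r}v\bigr)$ only in the convex regime $u''\geq 0$, and then treat the concave regime $u''\leq 0$ as a separate, ``easier'' case with threshold $(N(1+\alpha)-\alpha)/(N-2)\geq \tilde p^-$. The difficulty is that a genuine radial supersolution can switch between the two regimes infinitely often, and neither divergence form $\lambda\bigl(v'+\tfrac{n_--1}{r}v\bigr)$ nor $\Lambda\bigl(v'+\tfrac{N-1}{r}v\bigr)$ dominates $-{\cal M}^-(D^2u)$ from above uniformly in the sign of $u''$. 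So the integrated inequality you need does not follow by simply treating the two regimes in isolation; one has to argue more carefully (for instance, working directly with the piecewise-defined expression for ${\cal M}^-$ on radial functions, or showing that the ``bad'' concave pieces contribute with the favorable sign after integration by parts). Your remark that the concave threshold is less restrictive is correct as a heuristic for why $n_-$ is the right dimension, but it is not by itself a proof that the mixed case goes through. The other caveats you flag (viscosity vs.\ classical at $\{v=0\}$, the endpoint $p=\tilde p^-$) are genuine and would also need to be filled in.
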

Note that the extension to an half space is required to prove the existence of solution provided that $p < \tilde p^+$. It uses the moving plane method and in our setting   this brings new difficulties due to the  failure of the strict comparison principle. 
     
     We now give the main result of this article. Let us recall that if $u$ is radial and ${\cal C}^2$, the eigenvalues of $D^2 u$ are $u^{\prime\prime}$ of order $1$ and ${u^\prime \over r}$ of order $N-1$. As a consequence, if for example $u^\prime \leq 0$ and $u^{ \prime \prime} \geq 0$, 
              $$ {\cal M}^+_{ \lambda, \Lambda} ( D^2 u) = \lambda (N-1){u^\prime \over r} + \Lambda u^{\prime \prime}$$
               while 
                $$ {\cal M}^-_{ \lambda, \Lambda} ( D^2 u) = \Lambda (N-1){u^\prime \over r} + \lambda u^{\prime \prime.}$$
                 By an abuse of notation, in the sequel, we denote in the same manner 
                 $ F( D^2 u)$
 and $F(r,  u^\prime, u^{\prime\prime})$  when $u$ is radial .  
 
  We  want to emphasize that in  the case of  the previous operators, and even in the radial case, we do not necessarily have $u \in { \cal C}^2$. More precisely, when $\alpha >0$ the solutions of an equation 
 $$| \nabla u |^\alpha F( D^2u) = f$$ when $f$ is continuous, are at most ${ \cal C}^{1,{1\over 1+\alpha}}$ ( \cite{BD2}, \cite{IS}), while in the case $\alpha \leq 0$  the radial solutions are ${ \cal C}^2$, (\cite{BD3}). Of course near a point where $u^\prime \neq 0$, the ${ \cal C}^2$ regularity holds for radial solutions. 
       
          We make the following additional assumption, which can probably be dropped. We just need it in the case $\alpha \neq 0$ to prove the local existence and uniqueness near a point where $u^\prime =0$. The restriction below is due to the method employed, and can probably be dropped in a future analysis. 
We then suppose that 
                \begin{equation}\label{kK}
                \begin{array}{lc}
                            \mbox{  There exist} \  k^\pm, K^\pm \mbox{ some positive constants such that }\ 
               \lambda \leq k^\pm \leq K^\pm\leq \Lambda, \ {\rm and} \\
              F(r,  u^\prime, u^{\prime \prime}) = {N-1\over r} (k^+ ( u^\prime)^+- k^- (u^\prime)^-) + K^+ (u^{\prime \prime})^+ -K^- ( u^{\prime \prime})^-.&
              \end{array}
              \end{equation} 
                  
               The main theorem of this paper is
              \begin{theo}\label{exieqp}
       Suppose that $F$ satisfies (\ref{F0}), (\ref{FNL}) and (\ref{kK}). Let $\alpha \in ]-1, +\infty[$, and $p> 1+\alpha$. Let $a>0$ and $b>0$ be given. There exists a positive / negative radial solution  for (\ref{eqp}).
       \end{theo}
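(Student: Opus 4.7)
The strategy is to translate (\ref{eqp}) into a radial second-order ODE and implement a shooting method on the initial slope. Writing $u = u(r)$, assumption (\ref{kK}) makes the equation fully explicit; for example, in the regime $u' > 0$, $u'' \leq 0$ relevant near $r = a$ for a positive solution, it reads
$$K^- (u')^\alpha u'' \;=\; -\,u^p \;-\; \frac{(N-1)k^+}{r}(u')^{\alpha+1},$$
with analogous formulas in the other three sign regimes. I would look for a positive solution by shooting: fix $u(a) = 0$, $u'(a) = \gamma > 0$, and track the first $T(\gamma) > a$ at which $u_\gamma$ vanishes again.

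Since $\gamma > 0$ and $r \geq a > 0$, the right-hand side above is locally Lipschitz in $(u, u')$ on $\{u' > 0\}$, and standard ODE theory gives a unique classical $C^2$ solution $u_\gamma$ on a maximal interval $(a, r^*(\gamma))$ on which $u' > 0$. Because $u > 0$ forces $u'' < 0$ there, $u_\gamma$ is strictly concave and, using the strict positivity of the forcing $u_\gamma^p$ on any compact subinterval of $(a, r^*(\gamma))$, one shows $r^*(\gamma) < \infty$ with $u_\gamma'(r^*) = 0$. The main obstacle is the continuation across $r^*$: the factor $|u'|^\alpha$ makes the ODE degenerate ($\alpha > 0$) or singular ($\alpha < 0$) at this point, and this is exactly where (\ref{kK}) is invoked. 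Setting $v := -u'$ past the maximum, the equation becomes
$$K^- v' \;=\; \frac{u^p}{v^\alpha} \;-\; \frac{(N-1)k^-}{r}\,v^{\alpha+1},\qquad v(r^*) = 0,$$
and one must establish local existence, uniqueness and continuous dependence of a $C^1$ continuation by a dedicated contraction argument adapted to the power behavior of the nonlinearity near $v=0$. This is the technical heart of the proof. Beyond $r^*$, the solution again lives in a locally Lipschitz regime ($u > 0$, $u' < 0$) and may be continued up to its first zero $T(\gamma)$.

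Continuous dependence on $\gamma$ in each smooth regime, glued together at $r^*(\gamma)$ by the previous step, yields continuity of $\gamma \mapsto T(\gamma)$ wherever finite. A scaling balance of $|u'|^\alpha|u''|$ against $u^p$ on a solution of amplitude $M$ and length scale $L$ gives $M^{p-1-\alpha} L^{2+\alpha} \sim 1$; combined with $M \sim \gamma L$ from the initial condition, this yields
$$L \;\sim\; \gamma^{\frac{1+\alpha-p}{1+p}}.$$
Since $p > 1+\alpha$, the exponent is negative, so $T(\gamma) - a \to 0$ as $\gamma \to \infty$. Conversely, as $\gamma \to 0$, the rescaled solution $u_\gamma/\gamma$ converges to a nontrivial solution of the linear problem $F(D^2 u) = 0$ with $u(a) = 0$, $u'(a) = 1$, which stays strictly positive on $[a, b]$; hence $T(\gamma) > b$ for $\gamma$ small. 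The intermediate value theorem then produces $\gamma^*$ with $T(\gamma^*) = b$, giving the positive radial solution. The negative solution is obtained by running the same argument with $F$ replaced by $\tilde F(M) := -F(-M)$, which still satisfies (\ref{F0}), (\ref{FNL}) and (\ref{kK}) (with the roles of $k^+,K^+$ and $k^-,K^-$ interchanged).

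The expected main obstacle is the passage across the turning point $r^*$: without a structural assumption, uniqueness of the Cauchy problem at a point where $u' = 0$ genuinely fails for $\alpha \neq 0$ because the equation ceases to be Lipschitz in $u'$. Hypothesis (\ref{kK}) is tailored precisely to restore a quantitative, unique local solvability at this point, and carrying out this analysis with enough control to propagate continuous dependence in $\gamma$ is where essentially all the work will go.
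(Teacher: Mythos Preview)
Your shooting strategy, the identification of the degenerate turning point $u'=0$ as the place where hypothesis (\ref{kK}) earns its keep, and the reduction to $\tilde F(M)=-F(-M)$ for the negative solution are exactly the paper's plan. Where you diverge is in how the asymptotics of $T(\gamma)$ (the paper writes $\rho(\gamma)$) are obtained, and there is one structural point you have not covered.

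For the asymptotics, the paper does not rely on dimensional scaling or linearization. It builds four energy functionals of the shape $r^{\beta}\bigl(\tfrac{|u'|^{\alpha+2}}{\alpha+2}+\tfrac{u^{p+1}}{c(p+1)}\bigr)$ (Lemma~\ref{lem2} and its analogues $\mathcal E_1,\mathcal E_2$ in Proposition~\ref{proptau}), whose monotonicity on either side of the maximum $\tau(\gamma)$ yields two-sided algebraic bounds linking $u(\tau(\gamma))$, $\tau(\gamma)$ and $\gamma$; these give $\tau(\gamma)\to a$, $u(\tau(\gamma))\to\infty$ as $\gamma\to\infty$, and $\tau(\gamma)\to\infty$ as $\gamma\to 0$. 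The collapse $\rho(\gamma)-\tau(\gamma)\to 0$ is then forced by comparison with the principal eigenvalue $\lambda_1^+$ of $-|\nabla\cdot|^\alpha\mathcal M^{\pm}_{\lambda,\Lambda}$ on shrinking annuli (Proposition~\ref{gammatoinfty}), not by scaling. Your balance $M^{p-1-\alpha}L^{2+\alpha}\sim 1$ predicts the right exponent but, as written, is a heuristic rather than a step; making it rigorous would essentially require these energies or something equivalent. Your linearization for $\gamma\to 0$ is a legitimate alternative to the paper's route, though you would still need to pass to the limit through the degenerate turning point.

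There is also a genuine gap. You tacitly assume $T(\gamma)<\infty$ for every $\gamma>0$ and then apply the intermediate value theorem to $\gamma\mapsto T(\gamma)$. Nothing so far excludes solutions that remain positive on $[a,\infty)$ for intermediate values of $\gamma$, and on such a set $T$ is not even defined, let alone continuous. The paper handles this by introducing the open set $D=\{\gamma>0:\rho(\gamma)<\infty\}$, showing that $D$ contains a neighborhood of $+\infty$, and restricting to the unbounded connected component $(\gamma^*,\infty)\subset D$; continuous dependence (which does require the uniqueness at the turning point you highlighted) then gives $\rho(\gamma)\to\infty$ as $\gamma\searrow\gamma^*$, whether $\gamma^*=0$ or $\gamma^*>0$, and surjectivity of $\rho$ onto $(a,\infty)$ follows on this component.
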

              
          In the section 2, we prove the existence of positive solutions  defined in a maximal  intervall, and in the section 3, we show that there exists a maximal intervall which is equal to $\mathcal{A}_{a,b}$.
           
             \section{Local existence result (existence of solutions in a maximal intervall)}

             The idea to prove the existence of a solution is to establish first a local existence's result, which must be treated differently when we are near a point where $u^\prime \neq 0$ or $u^\prime =0$. We begin with the initial conditions $u^\prime ( a) = \gamma$ and $u( a) = 0$.
             
                                      We define then the Cauchy Problem 
                \begin{equation}\label{cauchy}
               \left\{ \begin{array}{lc}
                 -|u^\prime |^\alpha F( r, u^\prime, u^{\prime \prime}) = u^p& \ r>a\\
                  u( a) = 0, u^\prime ( a) = \gamma.&
                  \end{array}\right.
                  \end{equation}
                  
                         Since $u$ must be positif, $\gamma >0$. Suppose that we have obtained the existence on $\R^+$. We can then define a maximal intervall $[ a, \rho ( \gamma)[$ on which $u>0$. We will further prove that  there exists  $\gamma$ so that $\rho( \gamma)  = b$.

         Let us define 
           $G_{k}(x) = k^+  x^+-k^-x^-$, 
                    then  one can write,  near  a point  $r_o\neq 0$ so that $u^\prime ( r_o)\neq 0$ the equation under the form 
           $$ 
           ( x^\prime, y^\prime ) = (f_1(r,x,y), f_2(r,x, y))$$
            where 
            $$f_1(r,x,y) = y,\ \ f_2(r, x, y) = G_{{1\over K}} \left( -{x^p \over | y|^\alpha} -{N-1\over r} G_{k} (y)\right)$$
                    with $x(r_o) = u(r_o), y(r_o) = u^\prime (r_o)$. 
                     With these definitions, since $G_k$ is Lipschitz continuous, it is clear that  so are $f_1, $ and $f_2$ as long as $y\neq 0$. 
                         By the Cauchy Lipschitz Theorem, we have local existence when $u^\prime (r_o) \neq 0$.

We now consider the case $u^\prime (r_o) = 0$ :  In that situation we have immediately $u^{\prime \prime} (r_o) <0$ and then $u^\prime$ and $u^{\prime \prime} \leq 0$  on a neighborhood on the right of $r_o$. 
          
          The following Proposition is the only result where the assumption (\ref{kK}) is needed. 
                      \begin{prop}   
    Suppose that $r_o$ is so that $u^\prime (r_o) = 0$. Let $N_k = { (N-1) k^-\over K^-}$. Let $A>0$ be given and define the map 
       $$ T(u(r)) = u(r_o) -\int_{r_o}^r \left( { 1 \over s^{(N_k-1)(\alpha+1)}} \int_{r_o}^s \frac{\alpha+1}{K^-} u^p (t) t^{(N_k-1)(\alpha+1)} d t\right)^{1 \over 1+ \alpha} ds$$
        Then, there exists $\delta$,  depending only on $u(r_o)$, so that , letting
$$B_{r_o, A,  \delta} = \left\lbrace  u\in {\cal C} ( [r_o, r_o+\delta]), u(r_o) = A,  \ |u(r)-A | \leq { A\over 2}\right\rbrace,$$
     the operator   $ T$ maps $B_{r_o, A,  \delta} $ into itself, and $T$ is a contraction mapping on that set. 
\end{prop}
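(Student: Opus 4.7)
The plan is to recognize $T$ as the natural integral reformulation of the ODE in the regime near $r_o$, and to apply Banach's fixed point theorem on the closed ball $B_{r_o,A,\delta}\subset\mathcal{C}([r_o,r_o+\delta])$ (which is complete in the uniform norm).

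The first step is to derive the integral formula. Under (\ref{kK}), and using that $u'\le 0$ and $u''\le 0$ on a right neighborhood of $r_o$ (as observed just before the statement), $F(r,u',u'')$ collapses to $\frac{(N-1)k^-}{r}u' + K^- u''$. Setting $v=-u'\ge 0$, the equation $-|u'|^\alpha F(r,u',u'') = u^p$ becomes
$$K^- v^\alpha v' + \frac{(N-1)k^-}{r} v^{\alpha+1} = u^p,$$
which, after multiplication by the appropriate integrating factor $r^m$, can be rewritten as $\tfrac{K^-}{\alpha+1}(r^m v^{\alpha+1})' = r^m u^p$. Integrating from $r_o$ using $v(r_o)=0$, and then integrating $u'=-v$ once more, produces exactly the formula defining $T$.

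Stability $T(B_{r_o,A,\delta})\subset B_{r_o,A,\delta}$ follows from direct estimates: for $u\in B_{r_o,A,\delta}$ one has $A/2\le u\le 3A/2$, hence
$$\int_{r_o}^s \tfrac{\alpha+1}{K^-}u^p(t)\,t^{(N_k-1)(\alpha+1)}\,dt \le C_1(A,r_o)(s-r_o)$$
(taking $\delta\le r_o$, say). Raising to the power $1/(\alpha+1)$ and integrating once more in $s$ gives
$$|T(u)(r)-A|\le C_2(A,r_o)(r-r_o)^{(\alpha+2)/(\alpha+1)}.$$
Since $\alpha>-1$, the exponent $(\alpha+2)/(\alpha+1)$ is strictly positive, so choosing $\delta$ small (depending only on $A$ once $r_o, N_k, K^-, \alpha$ are fixed by the data) forces $|T(u)(r)-A|\le A/2$ throughout $[r_o,r_o+\delta]$.

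The main obstacle is the contraction estimate, because for $\alpha>0$ the map $x\mapsto x^{1/(\alpha+1)}$ is only H\"older near $0$, not Lipschitz. The way around is to use the strict positivity $u\ge A/2>0$ available inside $B$, which yields a two-sided bound
$$c_0(A,r_o)(s-r_o)\le \tfrac{1}{s^{(N_k-1)(\alpha+1)}}\int_{r_o}^s \tfrac{\alpha+1}{K^-}u^p(t)\,t^{(N_k-1)(\alpha+1)}\,dt \le c_1(A,r_o)(s-r_o).$$
The mean value theorem applied to $x\mapsto x^{1/(\alpha+1)}$ on this positive interval then provides a genuine Lipschitz bound with factor of order $(s-r_o)^{1/(\alpha+1)-1}$, and combining with the elementary estimate $|u_1^p(t)-u_2^p(t)|\le p(3A/2)^{p-1}\|u_1-u_2\|_\infty$ followed by integration in $s$ yields
$$\|T(u_1)-T(u_2)\|_\infty \le C_3(A,r_o)\,\delta^{(\alpha+2)/(\alpha+1)}\|u_1-u_2\|_\infty.$$
A final shrinking of $\delta$, still depending only on $A$, makes this constant strictly less than $1$, establishing the contraction property and yielding a unique fixed point by Banach's theorem.
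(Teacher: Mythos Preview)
Your argument is correct. The stability step matches the paper's proof essentially line for line. For the contraction step, however, you take a genuinely different route from the paper. The paper splits into two cases: for $\alpha>0$ it interprets the outer $(\cdot)^{1/(1+\alpha)}$ as an $L^{1+\alpha}$-norm with respect to the measure $d\mu=\frac{\alpha+1}{K^-}t^{(N_k-1)(\alpha+1)}\,dt$ and invokes the Minkowski inequality, which transfers the difference to $|u^{p/(1+\alpha)}-v^{p/(1+\alpha)}|$ and then uses $p/(1+\alpha)>1$ together with the mean value theorem; for $\alpha<0$ it applies the mean value theorem directly to $x\mapsto x^{1/(1+\alpha)}$, which is now $C^1$ with derivative bounded above on a bounded interval. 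Your approach instead exploits the pointwise lower bound $u\ge A/2$ to obtain a \emph{two-sided} estimate $c_0(s-r_o)\le I(s)\le c_1(s-r_o)$ on the inner integral and then applies the mean value theorem to $x\mapsto x^{1/(1+\alpha)}$ in both regimes, using the lower bound when $\alpha>0$ (to tame the singular derivative) and the upper bound when $\alpha<0$. This handles both signs of $\alpha$ in one stroke and avoids the Minkowski trick, at the cost of tracking the extra constant $c_0$; the resulting contraction factor $C\delta^{(\alpha+2)/(\alpha+1)}$ is the same as the paper's.
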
 

\ \\
\begin{proof}  We have that
\begin{eqnarray*}
|T(u)(r) -u(r_o)| &\leq ({3A\over 2} ))^{p\over 1+ \alpha} \frac{\alpha+1}{\alpha+2} \left(\frac{1+\alpha}{((N_k-1)(1+\alpha)+1)K^-}\right)^{\frac{1}{1+\alpha}}|r^{\alpha+2 \over 1+\alpha}-r_o^{\alpha+2 \over 1+\alpha}|\\
&\leq ({3A\over 2} ))^{p\over 1+ \alpha}\left(\frac{1+\alpha}{((N_k-1)(1+\alpha)+1)K^-}\right)^{\frac{1}{1+\alpha}}b ^{\frac{1}{1+\alpha}}|r-r_o|:= c_k
\end{eqnarray*}
           and then it is sufficient to ask 
           $$\delta\leq \left(\frac{((N_k-1)(1+\alpha)+1)K^-}{b(1+\alpha)}\right)^{\frac{1}{\alpha+1}}A^{ 1+\alpha-p\over 1+\alpha} 2^{p-1-\alpha\over 1+\alpha} 3^{-p\over 1+\alpha}  $$
           to ensure that $T$ maps $B_{r_o, A,  \delta} $ into itself. 
           
            Furthermore for $u$ and $v$ in $B( r_o, \delta)$, if $\alpha >0$, let us denote  $d\mu$ the measure $ \frac{\alpha+1}{K^-} t^{(N_k-1)(\alpha+1)} d t\chi_{]r_o, s[}$
            $$X(s) = | u^{ p\over 1+\alpha} |_{ L^{ \alpha+1} ( d\mu)}$$
            
                         and
                         $$  Y(s) = | v^{ p\over 1+\alpha} |_{ L^{ \alpha+1} (  d\mu) }$$
                          Then, by Minkowski inequality, one has 
                          
            \begin{eqnarray*}
           &&|T(u)(r)-T(v)(r)| \leq \int_{r_o}^r  { 1 \over s^{(N_k-1)}} |X(s)-Y(s) | ds \\
          & \leq & \int_{r_o}^r   { 1 \over s^{(N_k-1)}} |u^{ p\over 1+\alpha}-v^{ p\over 1+\alpha}|_{ L^{ \alpha+1} ( ]r_o, s[, d\mu)}ds\\
          &\leq& {p\over 1+\alpha} |u-v|_\infty \left( {3A\over 2}\right)^{p-\alpha-1 \over \alpha+1} \int_{r_o}^r {1\over s^{(N_k-1)}}\left(\int_{r_o}^s d\mu\right)^{1\over 1+\alpha} ds\\
          &\leq &  {p\over 1+\alpha} |u-v|_\infty \left( {3A\over 2}\right)^{p-\alpha-1 \over \alpha+1} \int_{r_o}^r {1\over s^{(N_k-1)}} {s^{N_k-1+{1\over 1+\alpha}}\over( (N_k-1)(1+\alpha)+1)^{1\over 1+\alpha} } ds \\
          &\leq & c_{p,k,\alpha, A} |u-v|_\infty |r^{2+\alpha \over 1+\alpha}-r_o^{2+\alpha \over 1+\alpha}|\\
          &\leq & c^\prime_{p,k, \alpha, A} |u-v|_\infty \delta 
          \end{eqnarray*}

                      where in the last lines we have used $p>1+\alpha$ and the mean value's theorem with $c_{p,k, A, \alpha}$ and $c^{\prime}_{p,k, A, \alpha}$, some constants depending on $p,\ k,\ A,$ and $\alpha$. 
             Then choosing $\delta$ small enough,  so that $$  c^\prime _{p,k,\alpha, A} \delta < {1\over 2},$$ one gets that $T$ is a contraction mapping. 
         
         If $\alpha <0$, we suppose $\delta<1$. One can take $d_{p,k, A, \alpha}$, a constant depending on $p,\ k,\ A,$ and $\alpha$, and
 \begin{eqnarray*}
         d^{\prime}_{p,k, A, \alpha}= { p\cdot d_{p,k, A, \alpha}^{-\alpha \over 1+\alpha}(r_o+1)\over 2K^-(( N_k-1)( \alpha+1)+1)}  \left({3A\over 2}\right)^{p-1} 
\end{eqnarray*} and use the mean value's theorem in the following manner 
      
 \begin{eqnarray*}
           &&|T(u)(r)-T(v)(r)| \leq {1\over 1+\alpha} \left|\int_{r_o}^r  \left( { \alpha+1\over K^- s^{(N_k-1)(\alpha+1)}} \int_{r_o}^s t^{{(N_k-1)(\alpha+1)}}(u^p-v^p)  ( t)  d t\right) ds\right|\times d_{p,k, A, \alpha}^{-\alpha \over 1+\alpha}  \\
           &\leq & p |u-v|_\infty \left({3A\over 2}\right)^{p-1}       {d_{p,k, A, \alpha}^{-\alpha \over 1+\alpha} \over  2K^-((N_k-1)(\alpha+1)+1)}| r^{2}- r_o^{2}| \\
           &\leq & d_{p,k, A, \alpha} |u-v|_\infty  |r-r_o| 
                              \end{eqnarray*}
                              
            
           and the conclusion follows in the same manner.
           \end{proof}
           
            From this result and the remark  done above when $u^\prime (r_o) \neq 0$, we get a maximal intervall $[a, \rho( \gamma)] $ on which $u>0$. 

\section{Existence of positive (or negative) solutions of (\ref{eqp}) in $\mathcal{A}_{a,b}$}  
  
 In the sequel, we will use an abuse by writing $|u^\prime |^\alpha u^{\prime \prime}$ even for points where $u^\prime =0$. : In the case $\alpha <0$ it does not make sense, and in the case $\alpha >0$, $u^{\prime \prime}$ may not exist. More correctly the reader must understand it as  ${d\over dr} ( |u^\prime |^\alpha u^\prime)$, which makes sense, since in any cases $u$ is ${ \cal C}^1$. 
                      
                     In order to limit and simplify some of the computations later, in particular those who do not require condition (\ref{kK}), let us  make a brief analysis  to limit the  cases occurring here (note that  we cannot have both $u^\prime \geq 0$ and $u^{ \prime \prime} \geq 0$):
          
           Case 1 : 
           $u^\prime \geq 0$ and $u^{\prime \prime} \leq 0$, then 
            $$-|u^\prime |^\alpha \left( \lambda u^{\prime \prime} + \Lambda { N-1\over r} u^\prime\right) \leq u^p\leq  -|u^\prime |^\alpha \left( \Lambda u^{\prime \prime} + \lambda {N-1\over r} u^\prime \right);$$
            
            Case 2 : $u^\prime$ and $u^{\prime \prime}$ $\leq 0$,  then 
            $$ -\lambda|u^\prime |^\alpha \left ( u^{\prime \prime}+ {N-1\over r} u^\prime \right) \leq u^p\leq  -\Lambda|u^\prime |^\alpha  \left( u^{\prime \prime}+ {N-1\over r} u^\prime \right); $$
               
               Case 3 :
               $ u^\prime \leq 0$ and $u^{\prime \prime} \geq 0$,
                then 
                
                  $$-|u^\prime |^\alpha \left( \Lambda u^{\prime \prime} + \lambda { N-1\over r} u^\prime\right) \leq u^p\leq 
       -|u^\prime |^\alpha \left( \lambda u^{\prime \prime} + \Lambda {N-1\over r} u^\prime \right).$$
            
             Let us remark in particular that $u^{\prime \prime} <0$ as long as $u^\prime (r) \geq 0$, and then a critical point of $u$ is a strict local maximum. 
   
              Let us recall  that we denote  $\gamma =  u^\prime ( a)$ which is then positive if $u$ is non negative and not identically zero by the strong maximum principle, see \cite{BD1}.  We will then denote $u(\cdot, \gamma)$ a positive radial solution  so that 
              $u^\prime (0, \gamma ) = \gamma$. 
            We begin with the following lemma

           \begin{lemme}\label{lem1}  Let $u$ be a nonnegative  radial solution of $-| \nabla u |^\alpha F( D^2 u) = u^p$, and suppose that $\gamma$  is defined as $u^\prime ( a) = \gamma$. Let $[a,\rho(\gamma))$ be a maximal intervall on which $u$ exists. There exists a unique $\tau(\gamma)\ \in\ (a,\rho(\gamma))$ so that $u^{\prime}(r,\gamma)>0$ for $r<\tau(\gamma)$ and $u^{\prime}(r,\gamma)<0$ for $r>\tau(\gamma).$
\end{lemme}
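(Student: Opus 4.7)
The plan is to exhibit a unique $\tau \in (a, \rho(\gamma))$ where $u'$ changes from positive to negative, by first producing $\tau$ and then ruling out any later zero of $u'$. Set $\tau := \inf\{ r \in (a, \rho(\gamma)) : u'(r) \leq 0\}$; by continuity of $u'$ together with $u'(a) = \gamma > 0$ one has $\tau > a$ and $u' > 0$ on $[a, \tau)$.

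To show $\tau < \rho(\gamma)$, I argue by contradiction, assuming $u' > 0$ throughout $[a, \rho(\gamma))$. The remark following the case analysis then puts us in Case 1 ($u'' < 0$) on this interval, so by assumption (\ref{kK}) the equation reduces to
$$-K^- |u'|^\alpha u'' - \frac{(N-1)k^+}{r}|u'|^{\alpha+1} = u^p.$$
Multiplication by $(1+\alpha) r^M/K^-$ with $M := (1+\alpha)(N-1)k^+/K^-$ produces a perfect differential and integrates into
$$r^M (u'(r))^{1+\alpha} = a^M\gamma^{1+\alpha} - \frac{1+\alpha}{K^-}\int_a^r s^M u^p(s)\,ds.$$
Since $u$ is strictly increasing from $0$, $\rho(\gamma)$ cannot be finite (otherwise $u$ stays bounded away from $0$ near $\rho(\gamma)$, contradicting $u(\rho(\gamma))=0$), hence $\rho(\gamma)=+\infty$; then the integral tends to $+\infty$ while the left-hand side remains non-negative, a contradiction. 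Therefore $\tau < \rho(\gamma)$ and $u'(\tau) = 0$ by continuity. The preceding Proposition applied at $r_o=\tau$ with $A:=u(\tau)>0$ then provides the unique local continuation; the explicit formula for $T$ shows $u'(r)<0$ strictly on $(\tau,\tau+\delta)$.

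For the uniqueness of $\tau$, I suppose toward contradiction that $r_2:=\min\{r>\tau:u'(r)=0\}<\rho(\gamma)$, so that $u'<0$ on $(\tau,r_2)$ and $u(r_2)>0$. Then $u\in \CT$ on $(\tau,r_2)$ and the equation falls pointwise into Case 2 or Case 3. Introduce
$$\Phi(r):=r^m(-u'(r))^{1+\alpha}, \qquad m:=\frac{(1+\alpha)(N-1)k^-}{\min(K^-,K^+)},$$
and compute directly, using (\ref{kK}), that in either case
$$\Phi'(r) = (1+\alpha)(N-1)k^-\Bigl[\tfrac{1}{\min(K^-,K^+)}-\tfrac{1}{K^\pm}\Bigr]r^{m-1}|u'|^{1+\alpha} + \frac{1+\alpha}{K^\pm}r^m u^p > 0,$$
where $K^\pm$ stands for $K^-$ in Case 2 and $K^+$ in Case 3; the bracket is non-negative by the choice of $m$, and the last term is strictly positive because $u>0$ on $(\tau,r_2)$. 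Hence $\Phi$ is strictly increasing on $(\tau,r_2)$, yet $\Phi(\tau)=\Phi(r_2)=0$ by continuity of $u'$, the desired contradiction.

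The delicate ingredient is this last step: $u''$ may a priori switch sign on $(\tau,r_2)$, so I need a single monotone quantity working in Case 2 and Case 3 simultaneously. The choice $m=(1+\alpha)(N-1)k^-/\min(K^-,K^+)$ arranges the leftover term in $\Phi'$ to have a favorable sign in both cases, exactly thanks to the explicit splitting of $F$ built into assumption (\ref{kK}); without (\ref{kK}) one would have to manipulate Pucci bounds less transparently.
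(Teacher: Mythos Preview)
Your argument is correct, but it is considerably heavier than the paper's and, contrary to your closing remark, the uniqueness of $\tau$ is \emph{not} the delicate step here.

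For the existence of $\tau$ the paper works only with the Case~1 inequalities coming from (\ref{FNL}): if $u'>0$ on all of $[a,\rho(\gamma))$ then $u$ is concave and increasing, so $\rho(\gamma)=+\infty$, $u$ has a positive limit, and the right Case~1 inequality forces $\limsup_{r\to\infty}(|u'|^\alpha u')'<0$, hence $|u'|^\alpha u'\to-\infty$, contradicting $u'\ge0$. Your exact integral identity achieves the same contradiction but invokes (\ref{kK}) to write it down; the paper does not need (\ref{kK}) at this point.

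For uniqueness the paper uses a one-line observation already recorded before the lemma: whenever $u>0$ and $u'=0$, the quantity $(|u'|^\alpha u')'$ is strictly negative (this is what the right-hand inequalities in Cases~2 and~3 give as $|u'|\to0$, since the first-order term vanishes and only $-u^p/\Lambda$ survives). Hence $|u'|^\alpha u'$ is strictly decreasing through $0$ at any critical point, so $u'>0$ immediately to the left of such a point: every positive critical point is a strict local maximum. If there were a second zero $r_2>\tau$ of $u'$ with $u'<0$ on $(\tau,r_2)$, this would force $u'>0$ just before $r_2$, a contradiction. Your monotone functional $\Phi(r)=r^m(-u')^{1+\alpha}$ with the clever choice of $m$ is a valid and self-contained alternative, and the case-uniform sign computation for $\Phi'$ is nicely done; but it is more machinery than the situation demands, and it ties the argument to (\ref{kK}) where the paper's strict-local-maximum observation uses only the Pucci bounds. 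In the paper, the only genuine use of (\ref{kK}) in this whole section is the fixed-point Proposition establishing local existence and uniqueness at $u'=0$, which you also invoke.
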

\begin{proof} Since $u^{\prime}(a)=\gamma>0$, from the equation $u^{\prime\prime}(a)<0$. Therefore, there exists a right neighborhood of $a$ where $u$ satisfies \textbf{Case} 1, and continues to satisfy \textbf{Case} 1 as long as $u$ increases. If we had $u^{\prime}(r)>0$ for all $r\ \in\ [a,\rho(\gamma))$, then $\rho(\gamma)=\infty$ and $u$ would be concave and increasing in $[a,+\infty)$. Thus, there would exist $\lim_{r\rightarrow\infty}u(r)>0$ (possibly $+\infty$), $\lim_{r\rightarrow\infty}u^{\prime}(r)<\gamma$ and, by \textbf{Case} 1,
\begin{eqnarray*}
\lim_{r\rightarrow\infty}\sup u^{\prime\prime}(r)|u^{\prime}|^{\alpha}\leq- \frac{1}{\Lambda}\lim_{r\rightarrow\infty}u^{p}(r)<0,
\end{eqnarray*}
and then
\begin{eqnarray*}
\limsup(|u^\prime |^\alpha u^\prime )^\prime <0
\end{eqnarray*}
which implies that
\begin{eqnarray*}
|u^\prime |^\alpha u^\prime ( +\infty) -|u^\prime |^\alpha u^\prime ( a) = \int_a^{ \infty}  (|u^\prime |^\alpha u^\prime )^\prime  = -\infty,
\end{eqnarray*}
a contradiction to $\lim_{r\rightarrow\infty}u^{\prime}(r)\geq 0 $.\\
Hence, there exists $\tau(\gamma)\ \in\ (a,\rho(\gamma))$ defined as the first zero of $u^{\prime}$. As already observed, any other critical point of $u$ would  be a local strict maximum point for $u$, from which it follows that $u^{\prime}(r)<0$ for all $r\ \in\ (\tau(\gamma),\rho(\gamma))$.

\end{proof}

We now define two energy functionals as in \cite{GLP} : 
\begin{lemme} \label{lem2} Let $n_{-}=\frac{(N-1)\Lambda}{\lambda}+1$ and
\begin{eqnarray*}
E_{1}(r)=r^{(2+\alpha)(n_{-}-1)}\left(\frac{|u^{\prime}|^{\alpha+2}}{\alpha+2}+\frac{u^{p+1}}{(p+1)\Lambda}\right)
\end{eqnarray*}
and
\begin{eqnarray*}
E_{2}(r)=\frac{|u^{\prime}|^{\alpha+2}}{\alpha+2}+\frac{u^{p+1}}{(p+1)\lambda}.
\end{eqnarray*}
Then $E_{1}(r)$ is non decreasing on $[\tau(\gamma),\rho(\gamma)]$, while $E_{2}(r)$ is non increasing on the same intervall.
\end{lemme}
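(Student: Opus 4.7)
The plan is to differentiate $E_1$ and $E_2$ directly and exploit the signs of $u'$ and $u''$ on $[\tau(\gamma),\rho(\gamma)]$, together with the two-sided Pucci bounds recorded at the start of this section. By Lemma \ref{lem1} we have $u'\leq 0$ throughout the interval, so only Case 2 ($u',u''\leq 0$) and Case 3 ($u'\leq 0,u''\geq 0$) can occur; the failure of $\mathcal C^2$ regularity at the (isolated) zeros of $u'$ is handled as flagged at the beginning of this section, by reading $|u'|^{\alpha}u''$ as $\tfrac{1}{\alpha+1}\tfrac{d}{dr}(|u'|^{\alpha}u')$.

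For $E_2$ one computes
\begin{equation*}
E_2'(r)=u'\left(|u'|^{\alpha}u''+\tfrac{u^p}{\lambda}\right),
\end{equation*}
so it suffices to check that the bracket is $\geq 0$. In Case 3 this is immediate from $u''\geq 0$; in Case 2 the lower Pucci bound $u^p\geq -\lambda|u'|^{\alpha}\bigl(u''+\tfrac{N-1}{r}u'\bigr)$ rearranges to $-\lambda|u'|^{\alpha}u''\leq u^p+\lambda|u'|^{\alpha}\tfrac{N-1}{r}u'\leq u^p$ (the final inequality using $u'\leq 0$). Multiplying by $u'\leq 0$ then yields $E_2'\leq 0$.

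For $E_1$, set $\beta:=(\alpha+2)(n_--1)=(\alpha+2)(N-1)\Lambda/\lambda$. A direct differentiation produces
\begin{equation*}
E_1'(r)=r^{\beta-1}\left[\beta\left(\tfrac{|u'|^{\alpha+2}}{\alpha+2}+\tfrac{u^{p+1}}{(p+1)\Lambda}\right)+ru'\bigl(|u'|^{\alpha}u''+\tfrac{u^p}{\Lambda}\bigr)\right],
\end{equation*}
so the task reduces to an upper bound on $|u'|^{\alpha}u''+u^p/\Lambda$. The upper Pucci inequality gives, in Case 2, $|u'|^{\alpha}u''+u^p/\Lambda\leq -\tfrac{N-1}{r}|u'|^{\alpha}u'$, and, in Case 3, $|u'|^{\alpha}u''+u^p/\Lambda\leq u^p(1/\Lambda-1/\lambda)-\tfrac{\Lambda(N-1)}{\lambda r}|u'|^{\alpha}u'$; the extra term in Case 3 is $\leq 0$ and may be discarded, and since $-u'\geq 0$ and $\Lambda/\lambda\geq 1$ the Case 2 bound is also dominated by $-\tfrac{\Lambda(N-1)}{\lambda r}|u'|^{\alpha}u'$. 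Multiplying this unified bound by $u'\leq 0$ (which reverses it) and then by $r>0$ yields $ru'\bigl(|u'|^{\alpha}u''+u^p/\Lambda\bigr)\geq -(n_--1)|u'|^{\alpha+2}$. Using $\beta/(\alpha+2)=n_--1$ the $|u'|^{\alpha+2}$ contributions in the bracket cancel exactly, leaving only the nonnegative term $\beta u^{p+1}/((p+1)\Lambda)$, so $E_1'\geq 0$.

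The main obstacle, and the source of the precise value of $n_-$, is the Case 3 estimate: when $u''\geq 0$ the raw lower Pucci inequality points the wrong way, one must invoke the upper Pucci inequality instead, and this introduces the ratio $\Lambda/\lambda$. The exponent $\beta=(\alpha+2)(N-1)\Lambda/\lambda$ is then dictated by the requirement that this ratio be absorbed exactly by the $\beta$-weighted $|u'|^{\alpha+2}$-term in the energy density; the favorable $u^{p+1}$-term and the sign of $u'$ do the rest.
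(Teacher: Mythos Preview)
Your proof is correct and follows essentially the same approach as the paper. The only presentational difference is that the paper first records the unified inequality $|u'|^{\alpha}\bigl(u''+\tfrac{\Lambda(N-1)}{\lambda r}u'\bigr)\leq -u^p/\Lambda$ as valid in both Cases~2 and~3 and then inserts it into the rewritten expression for $E_1'$, whereas you derive the equivalent bound $|u'|^{\alpha}u''+u^p/\Lambda\leq -\tfrac{(n_--1)}{r}|u'|^{\alpha}u'$ case by case before unifying; your closing remark on why the exponent $n_-$ is forced is a helpful addition not spelled out in the paper.
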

\begin{proof}  By Lemma (\ref{lem1}) we know that $u^{\prime}(r,\gamma)<0$ for $r\geq\tau(\gamma)$. Hence, u satisfies either inequalities in \textbf{Case} 2  or inequalities in \textbf{Case} 3.  Note that, in both the two cases, one has 
\begin{equation}\label{Lambda}
|u^{\prime}|^{\alpha}\left(u^{\prime \prime} + { \Lambda (N-1)\over \lambda r}u^\prime \right) \leq -{u^p \over \Lambda}.
\end{equation} 
By differentiating,  using the fact that $u^\prime <0$  and the  inequality above one gets that
\begin{eqnarray*}
E_{1}^{\prime}(r)&=&(n_{-}-1)r^{(2+\alpha)(n_{-}-1)-1}\left[|u^{\prime}|^{\alpha+2}+\frac{ (2+\alpha)}{\Lambda(p+1)}u^{p+1}(r)\right]\\
&&\ \ +r^{(2+\alpha)(n_{-}-1)}u^{\prime}(r)\left(u^{\prime\prime}(r)|u^{\prime}|^{\alpha}+\frac{u^{p}(r)}{\Lambda}\right)\\
&=& r^{(2+\alpha)(n_{-}-1)}\left[-u^{\prime}\left(-|u^{\prime}|^{\alpha}\left(u^{\prime\prime}(r)+\frac{\Lambda}{\lambda} \frac{N-1}{r}u^{\prime}\right)\right)\right.\\
&&\left. +u^{\prime}\dfrac{u^{p}(r)}{\Lambda}+\dfrac{(2+\alpha)(n_{-}-1)}{\Lambda(p+1)}\dfrac{u^{p+1}}{r}\right]\\
&\geq & r^{(2+\alpha)(n_{-}-1)}\dfrac{(2+\alpha)(n_{-}-1)}{\Lambda(p+1)}\dfrac{u^{p+1}}{r}.\\
\end{eqnarray*}

Analogously, 
\begin{eqnarray*}
E_{2}^{\prime}(r)=u^{\prime}(r)\left(u^{\prime\prime}(r)|u^{\prime}|^{\alpha}+\frac{u^{p}(r)}{\lambda}\right)
\end{eqnarray*}
and by the inequalities in \textbf{Case} 2 and in \textbf{Case} 3, it follows that $u$ satisfies for every $r\geq\tau(\gamma)$
\begin{eqnarray*}
-u^{\prime\prime}|u^{\prime}|^{\alpha}\leq \frac{u^{p}}{\lambda},
\end{eqnarray*}
from which we derive immediately that $E_{2}^{\prime}(r)\leq 0$ for $r\geq\tau(\gamma)$.
\end{proof}
\ \\
The following proposition is an easy consequence of the above lemma. 
\begin{prop} \label{prop1}Let $u$ be a nonnegative  radial solution of $-| \nabla u |^\alpha F( D^2 u) = u^p$ and $[a,\rho(\gamma))$ a maximal intervall on which $u$ exists.
If $\rho(\gamma)=+\infty$, then $\lim_{r\rightarrow+\infty}u(r)=0$.
\end{prop}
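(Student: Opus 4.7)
The plan is to argue by contradiction: assume $L:=\lim_{r\to+\infty}u(r)>0$ and show that this forces $|u'|$ to blow up, which is incompatible with $u\geq 0$. First I would invoke Lemma \ref{lem1}: since $\rho(\gamma)=+\infty$, $u'(r)<0$ for all $r>\tau(\gamma)$, so $u$ is strictly decreasing on $[\tau(\gamma),+\infty)$; being nonnegative, it has a finite limit $L\geq 0$, and $u(r)\geq L$ on that interval. Because $u'$ does not vanish there, the remark on radial regularity guarantees $u\in\mathcal{C}^2$ on $(\tau(\gamma),+\infty)$, so classical ODE manipulations are legitimate.

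Second, on $(\tau(\gamma),+\infty)$ we are pointwise in Case~2 or Case~3, and in both cases inequality (\ref{Lambda}) holds. Setting $v:=|u'|^{\alpha+1}$ and using $u'<0$ (so that $v'=-(\alpha+1)|u'|^{\alpha}u''$ and $-|u'|^{\alpha}u'=v$), I would rewrite (\ref{Lambda}) as the linear first-order differential inequality
\begin{equation*}
v'(r)+\frac{D}{r}\,v(r)\ \geq\ \frac{\alpha+1}{\Lambda}\,u^{p}(r),\qquad D:=\frac{(\alpha+1)\Lambda(N-1)}{\lambda}.
\end{equation*}
Multiplying by the integrating factor $r^{D}$ turns the left-hand side into $(r^{D}v)'$. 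Integrating from $\tau(\gamma)$ to $R$, using $v(\tau(\gamma))=0$ (since $u'(\tau(\gamma))=0$) and $u(s)\geq L$, yields
\begin{equation*}
R^{D}v(R)\ \geq\ \frac{(\alpha+1)L^{p}}{\Lambda(D+1)}\bigl(R^{D+1}-\tau(\gamma)^{D+1}\bigr),
\end{equation*}
so that $|u'(R)|^{\alpha+1}\geq cR$ for some $c>0$ and all $R$ sufficiently large. In particular $|u'(R)|\to+\infty$; integrating $u'$ from $\tau(\gamma)$ to $R$ then forces $u(R)\to-\infty$, contradicting $u\geq 0$. Hence $L=0$.

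The only subtlety is the sign-bookkeeping when turning (\ref{Lambda}) into a linear ODE for $v$, since the chain rule for $|u'|^{\alpha+1}$ depends on the sign of $u'$; once Lemma \ref{lem1} locks $u'<0$ on the relevant interval, this is straightforward. As an even shorter endgame one may alternatively quote Lemma \ref{lem2}: $E_{2}$ is non-increasing and nonnegative on $[\tau(\gamma),+\infty)$, so $|u'|^{\alpha+2}$ is bounded there, and this boundedness is immediately contradicted by the growth $v(R)\geq cR$, bypassing the second integration of $u'$.
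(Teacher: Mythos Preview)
Your argument is correct. Both your proof and the paper's hinge on Lemma~\ref{lem1} (so that $u$ is decreasing after $\tau(\gamma)$ and has a limit $L\ge0$) and on inequality~(\ref{Lambda}); the difference is only in how (\ref{Lambda}) is exploited. The paper first invokes Lemma~\ref{lem2} to bound $|u'|^{\alpha+2}$ via $E_2$, then passes to the $\limsup$ in (\ref{Lambda}) to obtain $0\le\limsup |u'|^{\alpha}u''\le -L^p/\Lambda$, forcing $L=0$. You instead multiply (\ref{Lambda}) by the integrating factor $r^{D}$ and integrate, obtaining the explicit growth $|u'(R)|^{\alpha+1}\gtrsim R$, and then close either by integrating $u'$ (contradicting $u\ge0$) or by quoting the boundedness of $|u'|$ from $E_2$. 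Your main line is slightly more self-contained since it does not require $E_2$, while the paper's is shorter but relies on the implicit step that $\limsup |u'|^{\alpha}u''<0$ is incompatible with $|u'|$ bounded. The sign bookkeeping you flag is handled correctly.
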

            
    \begin{proof}Let us suppose  that $\rho(\gamma)=+\infty$. Then,  since $u$ is  decreasing after $\tau(\gamma)$, it has a finite limit. Let  $c\geq0$ such that $\lim_{r\rightarrow+\infty}u(r,\gamma)=c$. Let us recall that for $r\geq\tau(\gamma)$, $u$ always satisfies (\ref{Lambda}). Furthermore, since $E_2$ is decreasing   on $[\tau( \gamma), +\infty)$,  
$ |u^\prime|^{ \alpha+2} $ is bounded. Hence, $\frac{\Lambda}{\lambda}(N-1)\frac{u^{\prime}}{r}|u^{\prime}|^{\alpha}$ tends to $0$ when $r$ goes to infinity. Finally, by (\ref{Lambda}),
\begin{eqnarray*}
0\leq\lim_{r\rightarrow\infty}\sup u^{\prime\prime}(r)|u^{\prime}|^{\alpha}\leq -\frac{c^{p}}{\Lambda},
\end{eqnarray*} 
which gives $c=0$.
\end{proof}

\begin{prop}\label{proptau}Let $u$ be a nonnegative  radial solution of $-| \nabla u |^\alpha F( D^2 u) = u^p$ and $[a,\rho(\gamma))$ a maximal intervall on which $u$ exists.  Suppose that $\tau$ is defined as in Lemma \ref{lem1} and $p>\alpha+1$. One has
\begin{itemize}
\item[a)]{$\lim_{\gamma\rightarrow+\infty}u(\tau(\gamma),\gamma)=+\infty$,}
\item[b)]{$\lim_{\gamma\rightarrow+\infty}\tau(\gamma)=a$,}
\item[c)]{$\lim_{\gamma\rightarrow0}u(\tau(\gamma),\gamma)=0$,}
\item[d)]{$\lim_{\gamma\rightarrow0}\tau(\gamma)=+\infty$.}
\end{itemize}
\end{prop}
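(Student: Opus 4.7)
The plan is to analyze the solution on the interval $[a,\tau(\gamma)]$, where by Lemma \ref{lem1} \textbf{Case 1} holds ($u'\geq 0$, $u''\leq 0$), so $u$ is concave: in particular $u'\leq\gamma$, $u\leq M(\gamma):=u(\tau(\gamma),\gamma)$, and $u\geq M/2$ on $[(\tau(\gamma)+a)/2,\tau(\gamma)]$. I would first pin down the size of $M(\gamma)$ as a power of $\gamma$, then translate this into estimates on $\tau(\gamma)$ via an integrating factor applied to each of the two Case 1 Pucci inequalities.

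\emph{Bounds on $M$.} Multiplying the upper Case 1 inequality, $\Lambda|u'|^{\alpha}u''+\lambda(N-1)|u'|^{\alpha+1}/r\leq -u^{p}$, by $u'\geq 0$ shows that $\Lambda|u'|^{\alpha+2}/(\alpha+2)+u^{p+1}/(p+1)$ is non-increasing on $[a,\tau(\gamma)]$; comparing endpoint values gives $M(\gamma)\leq C_{1}\gamma^{(\alpha+2)/(p+1)}$, which proves (c) and the upper half of (a). The symmetric computation with the lower Case 1 bound yields
$$\tfrac{\lambda\gamma^{\alpha+2}}{\alpha+2}\leq\tfrac{M^{p+1}}{p+1}+\Lambda(N-1)\!\int_{a}^{\tau(\gamma)}\!\tfrac{|u'|^{\alpha+2}}{r}\,dr,$$
and the error integral is bounded by $(\gamma^{\alpha+1}/a)\int u'\,dr\leq \gamma^{\alpha+1}M/a$. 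A dichotomy on which term dominates the right-hand side gives either $M\geq c\gamma^{(\alpha+2)/(p+1)}$ or $M\geq c\gamma$; because $p>1+\alpha$ forces $(\alpha+2)/(p+1)<1$, the second alternative is incompatible with the upper bound for $\gamma$ large, so $M(\gamma)\sim\gamma^{(\alpha+2)/(p+1)}$ as $\gamma\to+\infty$, finishing (a).

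\emph{Bounds on $\tau(\gamma)$.} The identity $\tfrac{d}{dr}(r^{\beta}(u')^{\alpha+1})=\beta r^{\beta-1}(u')^{\alpha+1}+(\alpha+1)r^{\beta}(u')^{\alpha}u''$ converts each Case 1 inequality into divergence form once $\beta$ is chosen to cancel the $(u')^{\alpha+1}$ coefficient. With $\beta_{-}=(\alpha+1)(n_{-}-1)$ the lower Pucci bound becomes $(r^{\beta_{-}}(u')^{\alpha+1})'\geq -(\alpha+1)r^{\beta_{-}}u^{p}/\lambda$; integrating from $a$ to $\tau(\gamma)$ (using $u'(\tau(\gamma))=0$) and dominating $u\leq M$ yields $\tau(\gamma)^{\beta_{-}+1}\geq c\,\gamma^{\alpha+1}/M^{p}$. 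Plugging in the upper bound $M\leq C_{1}\gamma^{(\alpha+2)/(p+1)}$ gives $\tau(\gamma)^{\beta_{-}+1}\geq c'\gamma^{(1+\alpha-p)/(p+1)}\to +\infty$ as $\gamma\to 0$, establishing (d). Symmetrically, $\beta_{+}=(\alpha+1)(n_{+}-1)$ gives $(r^{\beta_{+}}(u')^{\alpha+1})'\leq -(\alpha+1)r^{\beta_{+}}u^{p}/\Lambda$; integrating from $(\tau(\gamma)+a)/2$ to $\tau(\gamma)$ and using the concavity lower bound $u\geq M/2$ on this sub-interval produces $\tau(\gamma)-a\leq C\gamma^{\alpha+1}/M^{p}$, and then the lower bound on $M$ from the previous step gives $\tau(\gamma)-a\leq C'\gamma^{(1+\alpha-p)/(p+1)}\to 0$ as $\gamma\to +\infty$, finishing (b).

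The most delicate point is the lower bound on $M$ for large $\gamma$: the error integral has to be absorbed, and the dichotomy only closes because super-criticality $p>1+\alpha$ forces $(\alpha+2)/(p+1)<1$; the same exponent inequality is what drives the powers of $\gamma$ to $0$ or $+\infty$ in (b) and (d). Everything is carried out with the Pucci extremal bounds of Case 1, so the structural assumption (\ref{kK}) is not invoked in this proposition.
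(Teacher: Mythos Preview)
Your argument is correct and, for parts (a) and (b), genuinely different from the paper's. Both approaches agree on (c) via the decreasing energy $\mathcal{E}_2(r)=\frac{|u'|^{\alpha+2}}{\alpha+2}+\frac{u^{p+1}}{\Lambda(p+1)}$. For (a), the paper instead runs a contradiction: it establishes the weighted energy bound $u(\tau)^{p+1}\geq c(a/\tau)^{(2+\alpha)(n_--1)}\gamma^{\alpha+2}$ (their (\ref{lb1})) together with an integrated lower bound (\ref{lb2}) and a separation-of-variables upper bound (\ref{ub2}) of the form $u(\tau)\leq C(\tau-a)^{-(\alpha+2)/(p-\alpha-1)}$; if $u(\tau_k)$ stayed bounded along $\gamma_k\to\infty$, (\ref{lb1}) would force $\tau_k\to\infty$, and then (\ref{lb2})+(\ref{ub2}) would force $u(\tau_k)\to\infty$. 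Your energy-with-error approach, absorbing $\Lambda(N-1)\int |u'|^{\alpha+2}/r\leq \gamma^{\alpha+1}M/a$ and closing by dichotomy, is more direct and immediately delivers the two-sided asymptotic $M\sim\gamma^{(\alpha+2)/(p+1)}$ you later exploit. For (b), the paper reads $\tau-a\to 0$ off the explicit bound (\ref{ub2}) once (a) is known, whereas you integrate $(r^{\beta_+}(u')^{\alpha+1})'$ on the half-interval and feed in your lower bound on $M$; your route avoids the auxiliary integral $c_p=\int_0^1(1-t^{p+1})^{-1/(\alpha+2)}dt$, at the cost of not producing the sharp quantitative relation (\ref{ub2}) between $u(\tau)$ and $\tau-a$. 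For (d), the two arguments are close in spirit: the paper combines the concavity bound $M\leq\gamma\tau$ with (\ref{lb1}), while you reach the same exponent $(1+\alpha-p)/(p+1)$ via the $\beta_-$ integrating factor and the upper bound on $M$. Both proofs use only the Case~1 Pucci inequalities on $[a,\tau]$ and neither invokes (\ref{kK}).
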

\begin{proof} Let us denote in the following $\tau=\tau(\gamma)$.\\
By  Lemma \ref{lem1}, in the intervall $[a, \tau]$, $u$ is increasing, concave and satisfies  \textbf{Case} 1. Using the left-side inequality in \textbf{Case} 1, we have immediatly that, for $n_{-}$ as in Lemma \ref{lem2}, the following energy function 
\begin{eqnarray*}
\mathcal{E}_{1}(r)=r^{(2+\alpha)(n_{-}-1)}\left(\frac{|u^{\prime}|^{\alpha+2}}{\alpha+2}+\frac{u^{p+1}}{\lambda(p+1)}\right)
\end{eqnarray*}
satisfies $\mathcal{E}_{1}^{\prime}(r)\geq  {(2+\alpha) (N-1)\Lambda \over \lambda} r^{(2+\alpha)(n_{-}-1)}  {u^{p+1}\over \lambda ( p+1)}\geq 0  $ in $[a, \tau]$. Therefore $\mathcal{E}_{1}(\tau)\geq \mathcal{E}_{1}(a)$, and this yields a first lower bound on $u(\tau,\gamma)$:
\begin{eqnarray}
\label{lb1}
u^{p+1}(\tau,\gamma)\geq\dfrac{\lambda(p+1)}{\alpha+2}\left(\frac{a}{\tau}\right)^{(2+\alpha)(n_{-}-1)}\gamma^{\alpha+2}.
\end{eqnarray}
The left-side inequality of \textbf{Case} 1 can also be rewritten as
\begin{eqnarray*}
(u^{\prime})^{\alpha}u^{\prime\prime}+\dfrac{(n_{-}-1)}{r}(u^{\prime})^{\alpha+1}\geq-\frac{u^{p}}{\lambda}.
\end{eqnarray*}
Multiplying both sides by $r^{(n_{-}-1)(\alpha+1)}$ we have 
\begin{eqnarray*}
\left(\dfrac{r^{(n_{-}-1)(\alpha+1)}(u^{\prime})^{\alpha+1}}{\alpha+1}\right)^{\prime}\geq-\dfrac{r^{(n_{-}-1)(\alpha+1)}}{\lambda}u^{p}
\end{eqnarray*}
and integrating from $a$ to $r\ \in\ (a,\tau]$ we obtain
\begin{eqnarray}\label{XYalphapo}
\dfrac{r^{(n_{-}-1)(\alpha+1)}(u^{\prime})^{\alpha+1}}{\alpha+1}(r)&\geq & \dfrac{a^{(n_{-}-1)(\alpha+1)}\gamma^{\alpha+1}}{\alpha+1}-\frac{1}{\lambda}\int_{a}^{r}s^{(n_{-}-1)(\alpha+1)}u^{p}(s)ds\nonumber \\
&\geq &\dfrac{a^{(n_{-}-1)(\alpha+1)}\gamma^{\alpha+1}}{\alpha+1}-\dfrac{r^{(n_{-}-1)(\alpha+1)}u^{p}(r)(r-a)}{\lambda}. 
\end{eqnarray}Let us  divide both sides by $\dfrac{r^{(n_{-}-1)(\alpha+1)}}{\alpha+1}$, and let us  distinguish between the cases $\alpha \geq 0$ and $\alpha <0$ : 

When $\alpha \geq 0$, we use the inequality  $((X-Y)^+)^{\frac{1}{1+\alpha}}\geq X^{\frac{1}{1+\alpha}}-Y^{\frac{1}{1+\alpha}}$ for  $X,Y>0$, with $X^{1+\alpha}  = {\small\dfrac{a^{(n_{-}-1)(\alpha+1)}\gamma^{\alpha+1}}{\alpha+1}}$  and 
 $Y^{1+\alpha} = \dfrac{r^{(n_{-}-1)(\alpha+1)}u^{p}(r)(r-a)}{\lambda}$. Taking the power ${1\over 1+\alpha}$ of  both the left hand side and the positive part of the right hand side of (\ref{XYalphapo}),  one gets \\
\begin{eqnarray*}
u^{\prime}(r)&\geq a^{n_{-}-1} r^{1-n_{-}}\gamma-\left(\dfrac{(\alpha+1)u^{p}(r)(r-a)}{\lambda}\right)^{\frac{1}{\alpha+1}}.
\end{eqnarray*}
When $\alpha <0$, we use, denoting $ Z^{1+\alpha}  = {\small\dfrac{r^{(n_{-}-1)(\alpha+1)}(u^{\prime})^{\alpha+1}}{\alpha+1}}$, 
  $X^{1+\alpha}  = {\small\dfrac{a^{(n_{-}-1)(\alpha+1)}\gamma^{\alpha+1}}{\alpha+1}}$  and 
 $Y^{1+\alpha} = \dfrac{r^{(n_{-}-1)(\alpha+1)}u^{p}(r)(r-a)}{\lambda}$, the convexity inequality   $$X\leq (Z^{1+\alpha}+ Y^{1+\alpha})^{1\over 1+\alpha}  \leq 2^{-\alpha\over 1+\alpha} ( Z+ Y)$$
  and  then 

$$ Z \geq 2^{\alpha\over \alpha+1}  X -Y.$$
Hence  the two cases can be summarized by 
\begin{eqnarray*}
u^{\prime}(r)&\geq  2^{-\alpha^-\over -\alpha^-+1}a^{n_{-}-1} r^{1-n_{-}}\gamma-\left(\dfrac{(\alpha+1)u^{p}(r)(r-a)}{\lambda}\right)^{\frac{1}{\alpha+1}}.
\end{eqnarray*}

Integrating both sides from $a$ to $\tau$, we have the second lower bound

{\small\begin{eqnarray}\label{lb2}
u(\tau,\gamma)\geq  2^{-\alpha^-\over - \alpha^-+1}\dfrac{a^{n_{-}-1}\gamma}{2-n_{-}} \left[ \tau^{2-n_{-}}-a^{2-n_{-}}\right]-(\alpha+1)^{2+ \alpha \over \alpha+1} ( \alpha+2)^{-1} \left(\dfrac{u^{p}(\tau,\gamma)(\tau-a)}{\lambda}\right)^{\frac{1}{\alpha+1}}(\tau-a).
\end{eqnarray}}
with $2-n_{-}<0$.\\

On the other hand, using the right  hand side inequality in  \textbf{Case} 1, we have that the second energy function
\begin{eqnarray*}
\mathcal{E}_{2}(r)=\frac{|u^{\prime}|^{\alpha+2}}{\alpha+2}+\frac{u^{p+1}}{\Lambda(p+1)}
\end{eqnarray*} 
satisfies $\mathcal{E}_{2}^{\prime}(r)\leq -{N-1\over r}|u^\prime |^{\alpha+1} u^\prime  \leq 0$ in $[a,\tau]$. Therefore $\mathcal{E}_{2}(a)\geq \mathcal{E}_{2}(\tau)$, that is the upper bound
\begin{eqnarray}\label{ub1}
u^{p+1}(\tau,\gamma)\leq\dfrac{\Lambda(p+1)}{\alpha+2}\gamma^{\alpha+2}.
\end{eqnarray}
Moreover, again the right hand side inequality in  \textbf{Case} 1 and the fact that $u^{\prime}(r)>0$ for all $r\ \in\ [a,\tau)$ imply that
\begin{eqnarray*}
-\left(\frac{|u^{\prime}(r))|^{\alpha+2}}{\alpha+2}\right)^{\prime}\geq\dfrac{u^{p}(r)u^{\prime}(r)}{\Lambda}.
\end{eqnarray*}
By integrating the above inequality from $r\ \in\ [a,\tau)$ to $\tau$, we obtain
\begin{eqnarray*}
u^{\prime}(r)\geq \left(\dfrac{\alpha+2}{\Lambda(p+1)}[u^{p+1}(\tau)-u^{p+1}(r)]\right)^{\frac{1}{\alpha+2}}.
\end{eqnarray*}
Integrating once again on $[a,\tau]$,  and by  changing  the  variable, we get 
\begin{eqnarray*}
\left(\frac{2+\alpha}{\Lambda(p+1)}\right)^{\frac{1}{\alpha+2}}(\tau-a)\leq \int_{0}^{u(\tau)}{\dfrac{ds}{(u^{p+1}(\tau)-s^{p+1})^{\frac{1}{\alpha+2}}}}=\dfrac{1}{u{(\tau)}^{\frac{p-\alpha-1}{\alpha+2}}}\int_{0}^{1}{\dfrac{dt}{(1-t^{p+1})^{\frac{1}{\alpha+2}}}}.
\end{eqnarray*}
So, letting
\begin{eqnarray*}
c_{p}=\int_{0}^{1}{\dfrac{dt}{(1-t^{p+1})^{\frac{1}{\alpha+2}}}},
\end{eqnarray*}
we get the following second upper bound:
\begin{eqnarray}\label{ub2}
u(\tau,\gamma)\leq \left(\frac{\Lambda(p+1)}{2+\alpha}\right)^{\frac{1}{p-\alpha-1}}\left(\dfrac{c_{p}}{\tau-a}\right)^{\frac{\alpha+2}{p-\alpha-1}}.
\end{eqnarray}

Using (\ref{lb2}) and (\ref{ub2}) we have in both cases $\alpha >0$ and $\alpha \leq 0$,  that
\begin{eqnarray}\label{lastlow}
u(\tau,\gamma)\geq   2^{-\alpha^-\over1-\alpha^-} \dfrac{a^{n_{-}-1} \gamma}{2-n_{-}} \left[ \tau^{2-n_{-}}-a^{2-n_{-}}\right]-C_{\Lambda,\lambda,p,\alpha}(\tau-a)^{-\frac{\alpha+2}{(p-\alpha-1)}}.
\end{eqnarray}
with $C_{\Lambda,\lambda,p,\alpha}$ a constant depending only on $\Lambda,\ \lambda,\ p\ \mbox{and}\ \alpha$.\\
\ \\
Let us now suppose, by contradiction that there exist a positive constant $M$ and a diverging sequence $\gamma_{k}\rightarrow+\infty$ such that
 \begin{eqnarray*}
 u(\tau(\gamma_{k}),\gamma_{k})\leq M.
 \end{eqnarray*}

Using (\ref{lb1}) we have that $\tau(\gamma_{k})\rightarrow+\infty$, which implies, by (\ref{lastlow}), since $n^->2$ 
\begin{eqnarray*}
 u(\tau(\gamma_{k}),\gamma_{k})\geq  2^{-\alpha^-\over1-\alpha^-} \dfrac{a^{n_{-}-1} \gamma_k}{n_{-}-2} a^{2-n_{-}}+o(1) \rightarrow+\infty.
\end{eqnarray*}
That is a contradiction. So \textit{a)} holds. Hence, by (\ref{ub2}),  \textit{b)} follows. Moreover, (\ref{ub1}) immediately gives \textit{c)}. Finally, we observe that, by the concavity, or more  correctly  by the nonincreasing behaviour of $u^\prime$, one has 
\begin{eqnarray*}
u(\tau,\gamma)\leq \gamma(\tau-a)<\gamma\tau.
\end{eqnarray*}
Using the above inequality and (\ref{lb1}), it then follows
\begin{eqnarray*}
\tau(\gamma)^{p+1+(2+\alpha)(n_{-}-1)}\geq\dfrac{\lambda(p+1)}{(\alpha+2)\gamma^{p-\alpha-1}}a^{(2+\alpha)(n_{-}-1)}
\end{eqnarray*}
which proves \textit{d)} since $p>\alpha+1$.
\end{proof}
To prove Theorem \ref{exieqp}, we will need, using the properties of the principal eigenvalue of the Pucci's operators, to  make precise  the behavior of the function $\gamma: \mapsto \rho(\gamma)$. We have the following proposition. 

    \begin{prop}~ \label{gammatoinfty}\\
    Let $u$ be a nonnegative  radial solution of $-| \nabla u |^\alpha F( D^2 u) = u^p$ and $[a,\rho(\gamma))$ a maximal intervall on which $u$ exists.  
\begin{itemize}
\item[i)]{ For every $M>0$, there exists a positive constant $\delta$ depending only on $M,\ a,\ n,\ p,\ \lambda$\mbox{ and } $\Lambda$ such that $$0<\gamma< M\ \Longrightarrow\ a+\delta\leq \rho(\gamma)\leq+\infty.$$ }
\item[ii)]{For $\gamma$ sufficiently large, we have $\rho(\gamma)<+\infty$ and, moreover, $$\lim_{\gamma\longrightarrow+\infty}\rho(\gamma)=a.$$}
\item[iii)]{Finally $$\lim_{\gamma\longrightarrow+\infty}u^{\prime}(\rho(\gamma),\gamma)=-\infty.$$}
\end{itemize}

\end{prop}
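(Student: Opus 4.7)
The plan is to combine the monotone energies $E_1$ and $E_2$ of Lemma \ref{lem2} with the estimates (\ref{lb1}) and (\ref{ub1}) of Proposition \ref{proptau}. I would split on the size of $\gamma$ in (i), use an ODE comparison on a self-consistent window in (ii), and read off (iii) directly from $E_1$.

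For (i), when $\gamma$ is small, the computation at the end of the proof of Proposition \ref{proptau}(d) gives $\tau(\gamma)^{p+1+(\alpha+2)(n_--1)}\geq c\,\gamma^{-(p-\alpha-1)}$, so $\tau(\gamma)\geq a+1$ for $\gamma\leq\gamma_0=\gamma_0(M,a,N,p,\lambda,\Lambda)$ small enough, whence $\rho(\gamma)\geq a+1$. For $\gamma\in[\gamma_0,M]$, the energy $\mathcal{E}_2$ from the proof of Proposition \ref{proptau} and the energy $E_2$ of Lemma \ref{lem2} yield a uniform bound $|u'|\leq C_1$ on $[a,\tau]\cup[\tau,\rho)$ (using (\ref{ub1}) on the second piece). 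One then distinguishes $\tau>2a$ (trivially $\rho\geq\tau>a+a$) from $\tau\leq 2a$; in the latter case (\ref{lb1}) gives a uniform lower bound $u(\tau,\gamma)\geq c_2>0$, and since $u$ descends from $u(\tau)$ to $0$ at speed at most $C_1$, we obtain $\rho(\gamma)-\tau(\gamma)\geq c_2/C_1$. Taking $\delta$ as the minimum of all these bounds concludes (i).

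For (ii), by Proposition \ref{proptau}(a),(b), $u(\tau,\gamma)\to\infty$ and $\tau(\gamma)\to a$. The non-decreasing energy $E_1$ evaluated on the window $r\in[\tau,2\tau]$ gives
\begin{equation*}
|u'(r)|^{\alpha+2}\geq\frac{\alpha+2}{(p+1)\Lambda}\bigl[(\tau/r)^{(\alpha+2)(n_--1)}u(\tau,\gamma)^{p+1}-u(r)^{p+1}\bigr]_+,
\end{equation*}
and on this window $(\tau/r)^{(\alpha+2)(n_--1)}\geq\beta:=2^{-(\alpha+2)(n_--1)}$. Setting $\mu=\beta^{1/(p+1)}/2$, once $u(r)\leq\mu\,u(\tau,\gamma)$ one has $|u'(r)|\geq c\,u(\tau,\gamma)^{(p+1)/(\alpha+2)}$, so separation of variables bounds the time to descend from $\mu\,u(\tau,\gamma)$ to $0$ by $\tilde c\,u(\tau,\gamma)^{-(p-\alpha-1)/(\alpha+2)}$. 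The initial phase from $u(\tau,\gamma)$ to $\mu\,u(\tau,\gamma)$ I would treat via the fixed-point formula for $u$ at the maximum (the proposition in Section 2 with $A=u(\tau,\gamma)$), whose leading $u^p$ term gives the same scaling for the time. Summing, $\rho(\gamma)-\tau(\gamma)=O(u(\tau,\gamma)^{-(p-\alpha-1)/(\alpha+2)})\to 0$, which is self-consistent with $\rho\leq 2\tau$ for large $\gamma$; combined with (b) this gives $\rho<\infty$ and $\rho(\gamma)\to a$.

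Part (iii) follows by evaluating $E_1(\rho)\geq E_1(\tau)$ at $r=\rho$ with $u(\rho,\gamma)=0$:
\begin{equation*}
|u'(\rho,\gamma)|^{\alpha+2}\geq\frac{\alpha+2}{(p+1)\Lambda}(\tau/\rho)^{(\alpha+2)(n_--1)}u(\tau,\gamma)^{p+1};
\end{equation*}
by (a), (b) and part (ii), $u(\tau,\gamma)\to\infty$ and $\tau/\rho\to 1$, so $|u'(\rho,\gamma)|\to\infty$. The main obstacle is (ii): the $E_1$ lower bound on $|u'|$ degenerates near $u=u(\tau,\gamma)$, so the time spent in the initial descent phase cannot be extracted from $E_1$ alone, and one must use the local fixed-point representation near the critical point to close the argument in a self-consistent window.
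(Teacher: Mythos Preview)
Your argument for (iii) coincides with the paper's: both read off the blow-up of $|u'(\rho,\gamma)|$ from $E_1(\rho)\geq E_1(\tau)$ together with $u(\tau,\gamma)\to\infty$ and $\tau/\rho\to 1$.

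For (i) and (ii), however, your route is genuinely different from the paper's. The paper argues \emph{spectrally}: in (i) it observes that $u$ is a positive subsolution of $-|\nabla u|^\alpha\mathcal{M}^+_{\lambda,\Lambda}(D^2u)\leq(\max u)^{p-\alpha-1}u^{1+\alpha}$ on $\mathcal{A}_{a,\rho}$, so the maximum-principle characterization of the principal eigenvalue forces $(\max u)^{p-\alpha-1}\geq\lambda_1^+(\mathcal{A}_{a,\rho})$; combined with (\ref{ub1}) and the ABP-based blow-up of $\lambda_1^+$ as the annulus shrinks, this bounds $\rho/a$ away from $1$. In (ii) the paper argues by contradiction that $\rho_k>(1+\delta)\tau_k$ along a sequence $\gamma_k\to\infty$, uses the analogous supersolution inequality with $\mathcal{M}^-$ to get a uniform upper bound on $u_k$ on $[(1+\delta/2)\tau_k,(1+\delta)\tau_k]$, and then contradicts this via the $E_1$ lower bound on $|u_k'|$. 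Your approach stays entirely within elementary ODE/energy estimates: in (i) you split on the size of $\gamma$ and use the uniform Lipschitz bound on $u$ coming from $\mathcal{E}_2$ and $E_2$; in (ii) you bound $\rho-\tau$ directly by separating the descent into an ``initial phase'' $u(\tau)\to\mu u(\tau)$ and a ``tail phase'' $\mu u(\tau)\to 0$. The advantage of your approach is that it is self-contained and avoids invoking the ABP estimate and eigenvalue theory; the paper's approach is shorter once that machinery is available, and it does not require the somewhat delicate self-consistency bootstrap on the window $[\tau,2\tau]$.

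One point in your sketch of (ii) should be sharpened. Invoking ``the fixed-point formula for $u$ at the maximum'' is imprecise, since the proposition in Section~2 is derived under the sign configuration $u'\leq 0$, $u''\leq 0$ (Case~2), and on $[\tau,r_1]$ the solution may already transition to Case~3. The clean way to handle the initial phase is to integrate the differential inequality (\ref{Lambda}) (valid in both Cases~2 and~3): writing it as $\bigl(r^{(n_--1)(\alpha+1)}|u'|^\alpha u'\bigr)'\leq -\tfrac{\alpha+1}{\Lambda}r^{(n_--1)(\alpha+1)}u^p$ and integrating from $\tau$ gives, for $r\in[\tau,2\tau]$ with $u\geq\mu u(\tau)$, the lower bound $|u'(r)|\geq c\,u(\tau)^{p/(1+\alpha)}(r-\tau)^{1/(1+\alpha)}$, which upon a second integration yields $r_1-\tau\leq C\,u(\tau)^{-(p-\alpha-1)/(\alpha+2)}$ exactly as you claim. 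With this correction, your self-consistency argument (if $\rho>2\tau$ one derives $2\tau-r_1\to 0$ while $2\tau-r_1\geq\tau-o(1)\geq a/2$, a contradiction) closes correctly and gives both $\rho<\infty$ and $\rho-\tau\to 0$ for large $\gamma$.
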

\begin{proof}
\textbf{Proof of \textit{i})} : For $0\leq\gamma\leq M$, let us assume $\rho=\rho(\gamma)<+\infty$. Using the uniform ellipticity condition (\ref{FNL}) for $u=u(|x|,\gamma)$ a solution of (\ref{eqp}) we have that
\begin{align*}
-|\nabla u|^{\alpha}\mathcal{M}_{\lambda,\Lambda}^{+}(D^{2}u)&\leq-|\nabla u|^{\alpha}F(D^{2}u)=u^{p-\alpha-1}u^{1+\alpha}\\
&\leq \left(\max_{[a,\rho]}{u}\right)^{p-\alpha-1}u^{1+\alpha}.
\end{align*}
Then the fonction $u$ satisfies, in the annulus $\mathcal{A}_{a,\rho}$, the eigenvalue differential inequality
\begin{eqnarray}\label{ineqmax}
\begin{cases}
-|\nabla u|^{\alpha}\mathcal {M}_{\lambda,\Lambda}^{+}(D^{2}u)\leq \left(\max_{[a,\rho]}{u}\right)^{p-\alpha-1}u^{1+\alpha} \ \ \ \mbox{ in }\ \mathcal{A}_{a,\rho},\\
 \ \ u>0 \ \mbox{ in }\ \mathcal{A}_{a,\rho},\ u=0  \ \mbox{ on }\ \partial\mathcal{A}_{a,\rho}.
\end{cases}
\end{eqnarray}
Let $\lambda^{+}_{1}(-|\nabla u|^{\alpha}\mathcal{M}_{\lambda,\Lambda}^{+},\mathcal{A}_{a,\rho})$ be  the principal eigenvalue of the operator $-|\nabla u|^{\alpha}\mathcal{M}_{\lambda,\Lambda}^{+}$ in $\mathcal{A}_{a,\rho}$ associated with positive eigenfunctions.
Using the characterization of the sign of the principal eigenvalues in terms of the validity of the maximum principle, see \cite{BD1}, \cite{BEQ}, \cite{BNV} and \cite{EFQ}, the above inequality gives that 
\begin{eqnarray*}
\left(\max_{[a,\rho]}{u}\right)^{p-\alpha-1}\geq \lambda^{+}_{1}(-|\nabla u|^{\alpha}\mathcal{M}_{\lambda,\Lambda}^{+},\mathcal{A}_{a,\rho}). 
\end{eqnarray*}
Now, by  Lemma \ref{lem1}, we have the existence of a unique $\tau(\gamma)\in [a,\rho(\gamma)]$ such that $\max_{[a,\rho]}{u}=u(\tau(\gamma),\gamma)$. Using the bound (\ref{ub1}) on $u(\tau(\gamma),\gamma)$ and by obvious  scaling properties with respect to the domain of $\lambda^{+}_{1}$, we then obtain
\begin{eqnarray*}
\lambda^{+}_{1}(-|\nabla u|^{\alpha}\mathcal{M}_{\lambda,\Lambda}^{+},\mathcal{A}_{1,\rho /a})=a^{\alpha+2}\lambda^{+}_{1}(-|\nabla u|^{\alpha}\mathcal{M}_{\lambda,\Lambda}^{+},\mathcal{A}_{a,\rho})\leq a^{\alpha+2}\left(\dfrac{\Lambda(p+1)}{\alpha+2}\gamma^{\alpha+2}\right)^{ p-\alpha-1\over p+1}.
\end{eqnarray*}
We observe that, as in the case $\alpha = 0$, we have $\lim_{ | D| \rightarrow 0} \lambda_1^+(D) = +\infty$. This can be obtained as a  consequence of the Aleksandrof Backelman Pucci's estimate, due to \cite{DFQ}, Theorem 1.2 for the case $\alpha <0$, and to \cite{I}, Theorem 1 for the case $\alpha \geq 0$.  Indeed, applying this estimate to an eigenfunction $u$, with $|u|_\infty = 1$ here, we get 
$$ 1= \sup_{x\in \overline{ \Omega} } u\leq C\lambda_1^+(D)  ( \int_D |u|^{1+\alpha})^{1\over 1+\alpha} = C \lambda_1^+(D)  |D|^{1\over 1+\alpha}.$$
From this one gets 
$ \lambda_1^+(D) \geq {1 \over C |D|^{1\over 1+\alpha}}$.

Then, the above inequality shows that, for $\gamma$ bounded, the ratio $\rho(\gamma) /a$ is greater than 1 and bounded, and this is equivalent to statement (i).\\
\textbf{Proof of \textit{ii})} :  By Proposition \ref{proptau} (b), the statement is equivalent to 
\begin{eqnarray*}
\lim_{\gamma\rightarrow+\infty}{\rho(\gamma) \over \tau(\gamma)}=1.
\end{eqnarray*}
We will argue by contradiction as in \cite{GLP}. Let assume that there exist a sequence $\gamma_{k}\rightarrow+\infty$ and a constant $\delta$, such that, setting $\tau_{k}=\tau(\gamma_{k})$ and $\rho_{k}=\rho(\gamma_{k})$, with possibly $\rho_{k}=+\infty$, one has
\begin{eqnarray*}
\rho_{k}>(1+\delta)\tau_{k}\ \ \ \ \ \ \ \mbox{for all } k\geq 1.
\end{eqnarray*}
This means that $u_{k}(r)=u(r,\gamma_{k})$ is strictly positive in the intervall $[\tau_{k},(1+\delta)\tau_{k}]$. Then, using the uniform ellipticity condition (\ref{FNL}), it follows that $u_{k}$, as a function of $x$, satisfies in the annulus $\mathcal{A}_{\tau_{k},r}$ the eigenvalue differential inequality
\begin{eqnarray*}
u_{k}>0,\ \ \ \ -|\nabla u|^{\alpha}\mathcal{M}^{-}_{\lambda,\Lambda}(D^{2}u_{k}(x))\geq \left(\min_{[\tau_{k},r]}{u_{k}}\right)^{p-\alpha-1}u_{k}^{1+\alpha}(r), \ \ x \in \mathcal{A}_{\tau_{k},r}
\end{eqnarray*}
for every $r\in (\tau_{k},(1+\delta)\tau_{k}]$.\\
Denoting by $\lambda^{+}_{1}(-|\nabla u|^{\alpha}\mathcal{M}^{-}_{\lambda,\Lambda},\mathcal{A}_{\tau_{k},r})$ the principal eigenvalue of $-|\nabla u|^{\alpha}\mathcal{M}^{-}_{\lambda,\Lambda}$ in the domain $\mathcal{A}_{\tau_{k},r}$ associated with positive eigenfunctions, it follows that
\begin{eqnarray*}
\left(\min_{[\tau_{k},r]}{u_{k}}\right)^{p-\alpha-1}\leq \lambda^{+}_{1}(-|\nabla u|^{\alpha}\mathcal{M}^{-}_{\lambda,\Lambda},\mathcal{A}_{\tau_{k},r}),\ \ \ \mbox{ for all } r \in (\tau_{k},(1+\delta)\tau_{k}].
\end{eqnarray*}
By Lemma \ref{lem1}, $u_{k}(r)$ is monotone decreasing for $\tau_{k}\leq r< \rho(\gamma_{k})$, so that $\min_{[\tau_{k},r]}{u_{k}}=u_{k}(r)$. Moreover, by the homogeneity and monotonicity properties of $\lambda^{+}_{1}$ with respect to the domain, one has, for every $r\in [(1+\delta /2)\tau_{k}, (1+\delta)\tau_{k}]$, 
\begin{eqnarray*}
\lambda^{+}_{1}(-|\nabla u|^{\alpha}\mathcal{M}^{-}_{\lambda,\Lambda},\mathcal{A}_{\tau_{k},r})={1 \over \tau^{2+\alpha}_{k}}\lambda^{+}_{1}(-|\nabla u|^{\alpha}\mathcal{M}^{-}_{\lambda,\Lambda},\mathcal{A}_{1,{r \over \tau_{k}}})\leq {1 \over a^{2+\alpha}}\lambda^{+}_{1}(-|\nabla u|^{\alpha}\mathcal{M}^{-}_{\lambda,\Lambda},\mathcal{A}_{1,1+\delta /2}).
\end{eqnarray*}
It is clear that that $\lambda^{+}_{1}(-|\nabla u|^{\alpha}\mathcal{M}^{-}_{\lambda,\Lambda},\mathcal{A}_{1,1+\delta /2})$ is a positive number depending only on $n,\ \lambda, \ \Lambda,\ \alpha$ and $\delta$ and denote it by $C_{\delta}$. We then obtain the uniform estimate
\begin{eqnarray*}
u_{k}(r)^{p-\alpha-1}\leq {C_{\delta}\over a^{2+\alpha} } \ \ \ \ \ \mbox{ for all } r\in [(1+\delta /2)\tau_{k},(1+\delta)\tau_{k}].\ \ \ \ \ \ \ \ \ \ \ \  \ \ \ \ \ \ \ (\star)
\end{eqnarray*}
On the other hand, by Lemma \ref{lem2} we also have 
\begin{eqnarray*}
E_{1}(\tau_{k})\leq E_{1}(r) \ \ \ \ \ \ \ \ \mbox{ for all } r \in [\tau_{k},\rho(\tau_{k})),
\end{eqnarray*} 
and therefore for all $r\in [(1+\delta /2)\tau_{k},(1+\delta)\tau_{k}]$,
\begin{eqnarray*}
|u^{\prime}_{k}(r)|^{(2+\alpha)}\geq {1 \over \Lambda(p+1)}\left[{u_{k}(\tau_{k})^{p+1} \over (1+\delta)^{(2+\alpha)(n_{-}-1)}}-\left({C_{\delta} \over a^{2+\alpha}}\right)^{p+1 \over p-\alpha-1 }\right]:=M_{\delta}(k).
\end{eqnarray*}
By  Proposition  \ref{proptau} (a), one also has 
\begin{eqnarray*}
\lim_{k\rightarrow+\infty}{M_{\delta}(k)}=+\infty.
\end{eqnarray*}
Recall that we have $u^{\prime}_{k}(r)\leq 0$ for all $r$ in $[\tau_{k},\rho(\gamma_{k})]$. We then deduce that
\begin{eqnarray*}
-u^{\prime}_{k}(r)\geq (M_{\delta}(k))^{1\over 2+\alpha} ,\ \ \ \ \ \ \mbox{ for all } r\in [(1+\delta /2)\tau_{k},(1+\delta)\tau_{k}].
\end{eqnarray*}
By integration, this implies 
\begin{eqnarray*}
u_{k}((1+\delta /2)\tau_{k})\geq -\int_{(1+\delta /2)\tau_{k}}^{(1+\delta)\tau_{k}}{u^{\prime}_{k}(r) dr}\geq \delta \tau_{k}{\left( {{M_{\delta}(k) }}\right)^{1\over 2+\alpha}\over 2} \geq {\delta a\over 2}\left({{M_{\delta}(k) }}\right)^{1\over 2+\alpha} ,
\end{eqnarray*}
which contradicts $(\star)$ when $k\rightarrow+\infty$.\\
\textbf{Proof of \textit{iii})} : Let $\gamma$ be  large enough so that $\rho(\gamma)<+\infty$. By Lemma \ref{lem2} ($E_{1}$ increasing) we have $E_{1}(\rho(\gamma))\geq E_{1}(\tau(\gamma))$. And since (\ref{lb1}) holds, we have
\begin{equation*}
\rho(\gamma)^{(2+\alpha)(n_{-}-1)}\dfrac{|u^{\prime}(\rho(\gamma),\gamma)|^{2+\alpha}}{\alpha+2}\geq\tau(\gamma)^{(2+\alpha)(n_{-}-1)}\dfrac{u(\tau(\gamma),\gamma)^{p+1}}{\Lambda(p+1)}\geq\dfrac{\lambda}{\Lambda(2+\alpha)}a^{(2+\alpha)(n_{-}-1)}\gamma^{2+\alpha}.
\end{equation*}
Considering again the Lemma \ref{lem1} and using Hopf Boundary's Lemma, we have that $u^{\prime}(\rho(\gamma),\gamma)<0$, and we obtain
\begin{equation*}
u^{\prime}(\rho(\gamma),\gamma)\leq- \left(\dfrac{\lambda}{\Lambda}\right)^{\frac{1}{2+\alpha}}\left(\dfrac{a}{\rho(\gamma)}\right)^{n_{-}-1}\gamma.
\end{equation*}
Finally, since from  (ii) $\lim_{\gamma\longrightarrow+\infty}\rho(\gamma)=a$, we conclude that
$$\lim_{\gamma\longrightarrow+\infty}u^{\prime}(\rho(\gamma),\gamma)=-\infty.$$.

\end{proof}
   
    \textbf{Proof of Theorem \ref{exieqp}.} We first prove the existence of a positive solution for (\ref{eqp}) and deduce the existence of a negative one.
    
     For every $\gamma>0$, let $u(\rho(\gamma),\gamma)$ be the maximal positive solution of the Cauchy problem (\ref{cauchy}), defined on the maximal interval $[a,\rho(\gamma))$, and satisfying $u(\rho(\gamma),\gamma)=0$ if $\rho(\gamma)<+\infty$. We know that $u(a,\gamma)=0$, $u$ is positive,  radial and satisfy
\begin{equation*}
|\nabla u|^{\alpha}F(D^{2}u)+u^{p}=0 \ \ \ \ \ \ 
\end{equation*}
$\mbox{in}\ \ \mathcal{A}_{a,\rho(\gamma)}$. So we only need to prove the existence of a $\gamma>0$ such that $\rho(\gamma)=b$.
\\
We follow \cite{GLP} and define the set 
\begin{align*}
D=\{\gamma\geq0,\ \ \rho(\gamma)< +\infty\}.
\end{align*}
Comming back to the problem (\ref{cauchy}), we have that $D$ is an open set and $\rho: \gamma\mapsto \rho(\gamma)$ is continuous on $D$. From Proposition \ref{gammatoinfty}, we have that for $\gamma$ large enough, $\rho(\gamma)<+\infty$. This implies that $D$ is nonempty and contains a neighborhood of $+\infty$.
Let $(\gamma^{*},+\infty)$ be the unbounded connected component of $D$, with $\gamma^{*}\geq 0$.\\
If $\gamma^{*}= 0$, since $\rho(\gamma)>\tau(\gamma)$, from Proposition \ref{proptau} \textit{d)}, we obtain that
\begin{equation*}
\lim_{\gamma\searrow 0}{\rho(\gamma)}=+\infty.
\end{equation*}
If $\gamma^{*}> 0$, using again the continuous dependence on the initial data for problem (\ref{cauchy}), we have 
\begin{equation*}
\lim_{\gamma\searrow \gamma^{*}}{\rho(\gamma)}=+\infty.
\end{equation*}
By  Proposition \ref{gammatoinfty}, we also deduce that  $\rho$ is onto from $[\gamma^\star, +\infty)$ in $[a, +\infty)$. Hence if $b\in [ a, +\infty)$, there exists $\gamma$ in $[ \gamma^\star,+\infty)$ so that $\rho( \gamma ) = b$. So there exists a positive solution of (\ref{eqp}) in $\mathcal{A}_{a,b}$.\\
Let us now set 
\begin{equation*}
G(x,M)=-F(x,M).
\end{equation*}
$G$ also satisfies (\ref{F0})  (\ref{FNL}) and (\ref{kK}). We have then the existence of $v\in \mathcal{A}_{a,b}$ such that 
\begin{eqnarray*}
\begin{cases}
-|\nabla v|^{\alpha}G(D^{2}v)=|v|^{p-1}v\mbox{\ \ in } \  \mathcal{A}_{a,b}\\
v>0, \ \ \ v=0\mbox{ \ on} \  \partial\mathcal{A}_{a,b}.
\end{cases}
\end{eqnarray*}
And taking $u=-v$, we have that $u$ is a negative radial solution of 
\begin{eqnarray*}
\begin{cases}
-|\nabla u|^{\alpha}F(D^{2}u)=|u|^{p-1}u\mbox{\ \ in } \  \mathcal{A}_{a,b}\\
u<0, \ \ \ u=0\mbox{ \ on} \  \partial\mathcal{A}_{a,b}.
\end{cases}
\end{eqnarray*}
Thus Theorem  \ref{exieqp} is proved. $\hfill\square$


\begin{thebibliography}{99}
\bibitem{BD1}  I. Birindelli, F. Demengel, First eigenvalue and maximum principle for fully nonlinear singular operators,
Adv. Differential Equations 11 (2006), no. 1, 91-119.


 
 \bibitem{BD2}  I. Birindelli, F. Demengel, $C^{1,\beta}$ regularity for Dirichlet problems associated to Fully Non linear equations degenerate elliptic equations , COCV, (2014) , 1009-1024.
 
  \bibitem{BD3} I. Birindelli, F. Demengel, Regularity and uniqueness of the first eigenfunction for singular fully nonlinear operators. J. Differential Equations 249 (2010), no. 5, 1089-1110. 
 
 

 \bibitem{BEQ}  J. Busca, M. J. Esteban, A. Quaas {\em Nonlinear eigenvalues and bifurcation problems for Pucci’s operators,} Ann. Inst. H. Poincaré Anal. Non Linaire 22 (2005), no. 2, 187-206.
 
\bibitem{BNV}  H. Berestycki, L. Nirenberg, S. R. S. Varadhan, The principal eigenvalue and maximum principle for second order elliptic operators in general domains, Comm. Pure Appl. Math. 47 (1994), no. 1, 47-92

\bibitem{CC}  L. A. Caffarelli, X. Cabr\'e, {\em Fully nonlinear elliptic equations,} American Mathematical Society Colloquium Publications, 43. American Mathematical Society, Providence, RI, 1995.

\bibitem{CL} A. Cutri, F. Leoni
{\em On the Liouville property for fully nonlinear equations,}  Ann. Inst. Henri Poincar\'e, Analyse non lin\'eaire, 17, 2, (2000) 219-245.

\bibitem{D1} F. Demengel {\em Private Communication} 

\bibitem{DFQ} G. Davila, P.L. Felmer, A. Quaas, 
{\em Harnack inequality for singular fully nonlinear operators and some existence results}, Calc. Var. Partial Differential Equations, 39 (2010), pp. 557-578.

 \bibitem{EFQ}  M. J Esteban, P. L. Felmer, A. Quaas,  {\em Eigenvalues for radially symmetric fully nonlinear operators ,} Comm. Partial Differential Equations 35 (2010) no. 9 1716-1737.

  \bibitem{FQ}  P. L. Felmer, A. Quaas, {\em On critical exponents for the Pucci’s extremal operators,} Ann. Inst. H. Poincaré Anal. Non Linaire 20 (2003), no. 5, 843-865.
  
  \bibitem{GLP}  G. Galise, F. Leoni, F. Pacella {\em Existence results for fully nonlinear equations in radial domains,} Ann. Inst. H. Poincaré Anal. Non Linaire 22 (2005), no. 2, 187-206.
  
\bibitem{I} C. Imbert, Aleksandroff Bakelman Pucci estimate and Harnack inequality for degenerate/singular fully non linear elliptic equations, J. Diff. Equ. , 250 (2011), pp. 1553-1574.


\bibitem{IS} C. Imbert, L. Silvestre,  advances in math, $C^{1,\alpha}$ regularity of solutions of some degenerate fully non-linear elliptic
equations, advances in Mathematics, (233), (2013), pp. 196-206.
\bibitem{QS1}  A. Quaas, B. Sirakov, Principal eigenvalues and the Dirichlet problem for fully nonlinear elliptic operators,
Adv. Math. 218 (2008), no. 1, 105-135.
    
\bibitem{QS2}  A. Quaas, B. Sirakov, {\em Existence results for nonproper elliptic equations involving the Pucci operator,} Comm. Partial Differential Equations 31 (2006), no. 7-9, 987-1003.


 \end{thebibliography}
    \end{document}